\titleformat{\section}[block]{\filcenter\normalfont\bfseries\large}{\thesection.}{.5em}{}\titlespacing*{\section}{0pt}{2\baselineskip}{1\baselineskip}
\titleformat{\subsection}[runin]{\normalfont\bfseries}{\thesubsection.}{.4em}{}[.]\titlespacing{\subsection}{0pt}{2ex plus .1ex minus .2ex}{.8em}
\titleformat{\subsubsection}[runin]{\normalfont\itshape}{\thesubsubsection.}{.3em}{}[.]\titlespacing{\subsubsection}{0pt}{1ex plus .1ex minus .2ex}{.5em}
\titleformat{\paragraph}[runin]{\normalfont\itshape}{\theparagraph.}{.3em}{}[.]\titlespacing{\paragraph}{0pt}{1ex plus .1ex minus .2ex}{.5em}
\definecolor{darkred}{rgb}{0.9,0,0.3}
\definecolor{darkblue}{rgb}{0,0.3,0.9}
\definecolor{vdarkred}{rgb}{0.6,0,0.2}
\definecolor{vdarkblue}{rgb}{0,0.2,0.6}
\numberwithin{equation}{section}
\numberwithin{figure}{section}
\theoremstyle{plain} 
\newtheorem{theorem}{Theorem}[section]
\newtheorem*{theorem*}{Theorem}
\newtheorem{lemma}[theorem]{Lemma}
\newtheorem*{lemma*}{Lemma}
\newtheorem*{corollary*}{Corollary}
\newtheorem{proposition}[theorem]{Proposition}
\newtheorem*{proposition*}{Proposition}
\newtheorem*{conjecture*}{Conjecture}
\theoremstyle{definition} 
\newtheorem{definition}[theorem]{Definition}
\newtheorem*{definition*}{Definition}
\newtheorem*{example*}{Example}
\newtheorem{remark}[theorem]{Remark}
\newtheorem*{remark*}{Remark}
\newtheorem*{assumption*}{Assumption}
\newtheorem{Th}{Theorem}[section]
\newtheorem{lem}[Th]{Lemma}
\theoremstyle{definition}
\long\def\symbolfootnote[#1]#2{\begingroup
\def\thefootnote{\fnsymbol{footnote}}\footnote[#1]{#2}\endgroup}
\newcommand{\f}[1]{\boldsymbol{\mathrm{#1}}} 
\newcommand{\bb}{\mathbb} 
\renewcommand{\cal}{\mathcal} 
\newcommand{\fra}{\mathfrak} 
\newcommand{\col}{\vcentcolon}
\renewcommand{\P}{\mathbb{P}}
\newcommand{\E}{\mathbb{E}}
\newcommand{\R}{\mathbb{R}}
\newcommand{\C}{\mathbb{C}}
\newcommand{\N}{\mathbb{N}}
\newcommand{\ee}{\mathrm{e}}
\newcommand{\ii}{\mathrm{i}}
\newcommand{\dd}{\mathrm{d}}
\newcommand*{\deq}{\mathrel{\vcenter{\baselineskip0.65ex \lineskiplimit0pt \hbox{.}\hbox{.}}}=}
\newcommand*{\eqd}{=\mathrel{\vcenter{\baselineskip0.65ex \lineskiplimit0pt \hbox{.}\hbox{.}}}}
\renewcommand{\leq}{\leqslant}
\renewcommand{\geq}{\geqslant}
\renewcommand{\epsilon}{\varepsilon}
\DeclareMathOperator{\val}{Val}
\newcommand{\la}{\label}
\newcommand{\bgt}{\begin{itemize}}
\newcommand{\ent}{\end{itemize}}
\newcommand{\p}{\mathbb{P}}
\newcommand{\blem}{\begin{lem}}
\newcommand{\elem}{\end{lem}}
\newcommand{\ff}{\frac{1}}
\newcommand{\bbm}{\begin{bmatrix}}
\newcommand{\ebm}{\end{bmatrix}}
\newcommand{\bes}{\begin{equation*}}
\newcommand{\be}{\begin{equation}}
\newcommand{\ees}{\end{equation*}}
\newcommand{\beqy}{\begin{eqnarray}}
\newcommand{\eeqy}{\end{eqnarray}}
\newcommand{\beq}{\begin{eqnarray*}}
\newcommand{\eeq}{\end{eqnarray*}}
\newcommand{\bpm}{\begin{pmatrix}}
\newcommand{\epm}{\end{pmatrix}}
\newcommand{\ind}[1]{\f 1 (#1)}
\newcommand{\indB}[1]{\f 1 \pB{#1}}
\newcommand{\pb}[1]{\bigl(#1\bigr)}
\newcommand{\pB}[1]{\Bigl(#1\Bigr)}
\newcommand{\pbb}[1]{\biggl(#1\biggr)}
\newcommand{\pBB}[1]{\Biggl(#1\Biggr)}
\newcommand{\qB}[1]{\Bigl[#1\Bigr]}
\newcommand{\qbb}[1]{\biggl[#1\biggr]}
\newcommand{\qBB}[1]{\Biggl[#1\Biggr]}
\newcommand{\h}[1]{\{#1\}}
\newcommand{\abs}[1]{\lvert #1 \rvert}
\newcommand{\norm}[1]{\lVert #1 \rVert}
\newcommand{\normbb}[1]{\biggl\lVert #1 \biggr\rVert}
\newcommand{\normBB}[1]{\Biggl\lVert #1 \Biggr\rVert}
\DeclareMathOperator{\tr}{Tr}
\DeclareMathOperator{\im}{Im}
\title{Local law and complete eigenvector delocalization for supercritical Erd{\H o}s--R\'enyi graphs}
\author{Yukun He\footnote{University of Zurich, Institute of Mathematics, {\tt yukun.he@math.uzh.ch}.} \and Antti Knowles\footnote{University of Geneva, Section of Mathematics, {\tt antti.knowles@unige.ch}} \and Matteo Marcozzi\footnote{University of Geneva, Section of Mathematics, {\tt matteo.marcozzi@unige.ch}.}}
\begin{document}
\maketitle

\begin{abstract}
We prove a local law for the adjacency matrix of the Erd\H{o}s-R\'enyi graph $G(N, p)$ in the supercritical regime $ pN \geq C\log N$ where $G(N,p)$ has with high probability no isolated vertices. In the same regime, we also prove the complete delocalization of the eigenvectors. Both results are false in the complementary subcritical regime. Our result improves the corresponding results from \cite{EKYY1} by extending them all the way down to the critical scale $pN = O(\log N)$.
 
A key ingredient of our proof is a new family of multilinear large deviation estimates for sparse random vectors, which carefully balance mixed $\ell^2$ and $\ell^\infty$ norms of the coefficients with combinatorial factors, allowing us to prove strong enough concentration down to the critical scale $pN = O(\log N)$. These estimates are of independent interest and we expect them to be more generally useful in the analysis of very sparse random matrices.
\end{abstract}


\section{Introduction}
Let $\cal A \in \{0,1\}^{N \times N}$ be the adjacency matrix of the Erd\H{o}s-R\'enyi random graph $G(N,p)$, where $p \equiv p_N \in (0,1)$. That is, $\cal A$ is real symmetric, and its upper-triangular entries are independent Bernoulli random variables with mean $p$. The Erd\H{o}s-R\'enyi graph exhibits a phase transition in its connectivity around the critical expected degree $pN = \log N$. Indeed, for fixed $\epsilon > 0$, if $pN \geq (1 + \epsilon) \log N$ then $G(N,p)$ is with high probability connected, and if $pN \leq (1 - \epsilon) \log N$ then $G(N,p)$ has with high probability isolated vertices. See e.g.\ \cite[Chapter 5]{vanH} for a clear treatment. The aim of this article is to investigate the spectral and eigenvector properties of $G(N,p)$ in the supercritical regime, where $G(N,p)$ is connected with high probability.

Our main result is a \emph{local law} for the adjacency matrix $\cal A$ in the supercritical regime $C \log N \leq pN \ll N$, where $C$ is some universal constant. In order to describe it, it is convenient to introduce the rescaled adjacency matrix
\begin{equation} \label{def:A}
A \deq \sqrt{\frac{1}{p(1-p)N}} \, \cal A
\end{equation}
so that the typical eigenvalue spacing of $A$ is of order $N^{-1}$. A local law provides control of the matrix entries $G_{ij}(z)$ of the Green function
\begin{equation}  \label{def_G}
G(z)\deq(A-z)^{-1}\,,
\end{equation}
where $z=E+\ii \eta$ is a spectral parameter with positive imaginary part $\eta \gg N^{-1}$ defining the spectral scale. Our main result states that that the individual entries $G_{ij}(z)$ of the Green function concentrate all the way down to the critical scale $pN = C \log N$: the quantity
\begin{equation*}
\max_{i,j} \abs{G_{ij}(z) - \delta_{ij} m(z)}
\end{equation*}
is small with high probability for all $\eta \gg N^{-1}$; here $m(z)$ is the Stieltjes transform of the semicircle law.

Such local laws have become a cornerstone of random matrix theory, ever since the seminal work \cite{ESY1,ESY2} on Wigner matrices. They serve as fundamental tools in the study of the distribution of eigenvalues and eigenvectors, as well as in establishing universality in random matrix theory.

A local law has two well-known easy consequences, one for the eigenvectors and the other for the eigenvalues of $A$.
\begin{enumerate}
\item
The first consequence is \emph{complete eigenvector delocalization}, which states that the normalized eigenvectors $\{\f u_i\}$ of $A$ satisfiy with high probability
\begin{equation} \label{deloc_intro}
\max\{\|\f u_i\|_{\infty} \col 1\leq i \leq N\} \lesssim N^{-1/2}
\end{equation}
where $\lesssim$ denotes a bound up to some factor $N^{o(1)}$.
\item
The second consequence is a local law for the density of states, which states that the Stieltjes transform of the empirical eigenvalue distribution
\begin{equation*}
s(z) \deq \frac{1}{N} \tr G(z) = \frac{1}{N} \sum_{i = 1}^N \frac{1}{\lambda_i(A) - z}
\end{equation*}
satisfies $\abs{s(z) - m(z)} = o(1)$ with high probability for all $\eta \gg N^{-1}$. Informally, this means that the semicircle law holds down to very small spectral scales.
\end{enumerate}

It is a standard exercise to show that if $pN \to \infty$ then the (global) semicircle law for $s(z)$ holds, stating that $s(z) \to m(z)$ with high probability for any fixed $z \notin \R$.
On the other hand, it is not hard to see that both consequences (i) and (ii) are wrong in the subcritical regime $pN \leq (1 - \epsilon) \log N$. Indeed, in the subcritical regime there is with high probability an isolated vertex, with an associated eigenvector localized at that vertex. Hence, the left-hand side of \eqref{deloc_intro} is equal to one and (i) fails. To see how (ii) fails in the subcritical regime, let us set $pN = \kappa \log N$ and denote by $Y$ the number of isolated vertices. From \cite[Proposition 5.9]{vanH} we find that
\begin{equation*}
\E Y = N^{1 - \kappa} \pbb{1 + O \pbb{\frac{\kappa^2 (\log N)^2}{N}}}\,,
\end{equation*}
from which we deduce that $\P(Y \geq N^{1 - \kappa} / 4) \geq N^{-\kappa} / 4$ (since $Y \leq N$). Thus we find that with probability at least $N^{-\kappa} / 4$, the adjacency matrix $A$ has at least $N^{1 - \kappa}/4$ zero eigenvalues. We conclude that with probability at least $N^{-\kappa} / 4$ we have
\begin{equation*}
\im s(z) \geq \frac{\eta}{4 N^\kappa \abs{z}^2}\,.
\end{equation*}
For $\kappa < 1$, setting $z = \ii N^{- \frac{1 + \kappa}{2}}$ yields a contradiction to the estimate (ii), since $\abs{m(z)} \leq 1$. (The precise meaning of ``high probability'' in (ii) is sufficiently strong to rule out events of probability $N^{-\kappa} / 4$; see \eqref{semi} below).

Thus, our assumption $pN \geq C \log N$ is optimal up to the value of the numerical constant $C$. A local law for the Erd\H{o}s-R\'enyi graph was previously proved in \cite{EKYY1, EKYY2} under the assumption $pN \geq (\log N)^6$, and the contribution of this paper is therefore to cover the very sparse range $C \log N \leq pN \leq (\log N)^6$.

Next, we say a few words about the proof. Our strategy is based on the approach introduced in \cite{ESY1,ESY2,EYY1} and subsequently developed for sparse matrices in \cite{EKYY1,EKYY2}. Thus, we derive a self-consistent equation for the Green function $G$ using Schur's complement formula and large deviation estimates, which is then bootstrapped in the spectral scale $\eta$ to reach the smallest scale $N^{-1}$. The key difficulty in proving local laws for sparse matrices is that the entries are sparse random variables, and hence fluctuate much more strongly than in the Wigner case $p \asymp 1$. To that end, new large deviation estimates for sparse random vectors were developed in \cite{EKYY1}, which were however ineffective below the scale $(\log N)^6$.

The key novelty of our approach is a new family of multilinear large deviation bounds for sparse random vectors. They are optimal for very sparse vectors, and in particular allow us to reach the critical scale $pN = C \log N$. They provide bounds on multilinear functions of sparse vectors in terms of mixed $\ell^2$ and $\ell^\infty$ norms of their coefficients. We expect them to be more generally useful in a variety of problems on sparse random graphs. To illustrate them and how they are applied, consider a sparse random vector $X \in \R^N$ which is a single row of the matrix $A - \E A$. Let $(a_{ij})$ be a symmetric deterministic matrix. Then, for example, we have the $L^r$ bound
\begin{equation} \label{lde_intro}
\normbb{\sum_{i \neq j} a_{ij} X_i X_j}_r \leq \pbb{\frac{4 r}{1 + (\log (\psi / \gamma))_+} \vee 4}^2 (\gamma \vee \psi)\,,
\end{equation}
where
\begin{equation*}
\gamma \deq \pbb{\max_i \frac{1}{N} \sum_{j} \abs{a_{ij}}^2}^{1/2}\,, \qquad  \psi \deq \frac{\max_{i,j} \abs{a_{ij}}}{pN}\,.
\end{equation*}
We first remark that we have to take $r$ to be at least $\log N$. Indeed, our proof consists of an order $N^{O(1)}$ uses of such large deviation bounds. To compensate the factor $N^{O(1)}$ arising from the union bound, we therefore require bounds smaller than $N^{-D}$ on the error probabilities for any fixed $D>0$, which we obtain (by Chebyshev's inequality) from the large deviation bounds for $r = \log N$. The crucial feature of the bound \eqref{lde_intro} is the logarithmic factor in the denominator. Without it, there is nothing to compensate the factor $r^2 \geq (\log N)^2$ in the numerator, as $\psi \asymp (pN)^{-1} \asymp (\log N)^{-1}$ in the critical regime. Thus, the applicability of \eqref{lde_intro} hinges on the fact that $\psi / \gamma$ is sufficiently large; this assumption can in fact be verified in all of our applications of \eqref{lde_intro}.

Armed with these large deviation estimates, we can follow the basic approach of \cite{EYY1,EKYY1} to conclude the local law by a bootstrapping; the main difference is that we have to be cautious to work as much as possible with $L^r$ norms instead of the more commonly used high-probability bounds.

We remark that, in the recent years, a new approach to proving local laws has emerged \cite{LS1,HKR,HK}, which replaces the row by row operations of the Schur complement formula with operations on individual entries performed by the cumulant expansion (or generalized Stein's formula). This new approach is considerably more versatile and general than the one based on Schur's complement formula. However, it encounters serious difficulties with very sparse matrices, owing to the fact that it requires the computation of high moments of the Green function entries. For the very sparse scales that are of interest to us, these moments may be large, although the entries themselves are small with high probability. The underlying phenomenon is that the exceptional events on which the Green function is large do not have small enough probability to ensure the good behaviour of high moments.

Our proof is short and self-contained. For conciseness and clarity, we focus on the simple model of the Erd\H{o}s-R\'enyi graph, and do not aim for optimal error bounds. However, a straightforward modification of our method, combined with the deterministic analysis of the quadratic vector equation developed in \cite{AEK1,AEK2}, allows to extend our results to more general random graphs with independently chosen edges and general variance profiles, such as the stochastic block model. We do not pursue this direction further in this paper.

We conclude this section with a summary of some related results. The bulk universality for Erd{\H o}s--R\'enyi graphs was first proved in \cite{EKYY2} for $p \gg N^{-1/3}$, and it was later pushed to $p \gg N^{\varepsilon-1}$ for any fixed $\varepsilon>0$ in \cite{HL}. The edge universality was first proved for $p \gg N^{-1/3}$ in \cite{EKYY2}, and later extended to $p \gg N^{-2/3}$ in \cite{LS1}. More recently in \cite{HLY}, it was proved that the extreme eigenvalues has Gaussian fluctuations for $N^{-7/9}\ll p\ll N^{-2/3}$. We emphasize that the local law proved in this paper is, on its own, not precise enough to generalize the above universality results to the entire supercritical regime.

The extreme eigenvalues of $A$ in the supercritical and subcritical regimes were recently investigated in \cite{BBK1,BBK2}, where the authors proved that in the supercritical regime the extreme eigenvalues converge to the spectral edges $\pm 2$, and in the subcritical regime a fraction of eigenvalues leave the bulk $[-2,2]$ to become a cloud of outlier eigenvalues.

Local laws in the sparse regime have also been established for the random regular graph $G_{N,d}$, defined as the uniform probability distribution over all graphs on $N$ vertices such that every vertex has degree $d$. Because the degree of any vertex is fixed to be $d$, the random regular graph $G_{N,d}$ is much more stable in the sparse regime than the Erd\H{o}s-R\'enyi graph $G(N,d/N)$ with the same expected degree. In particular, $G_{N,d}$ does not exhibit a connectivity crossover and remains connected down to $d = 3$. A local law for the random regular graph $G_{N,d}$ for $d \geq (\log N)^4$ was proved in \cite{BKY15} and for fixed but large $d$ in \cite{BHY1}.

We organize the paper as follows. In Section \ref{sec:result} we state our main results. In Section \ref{sec:LDE} we prove a new family of multilinear large deviation estimates for sparse random vectors. In Section \ref{sec:proof} we use the results in Section \ref{sec:LDE} to complete the proof.

\section{Results} \label{sec:result}
For convenience, in the remaining of this paper we introduce the new variable 
$$
q \deq \sqrt{pN} \in (0,N^{1/2})\,.
$$
We consider random matrices of the following class; it is an easy exercise to check that $A$ defined in \eqref{def:A} in terms of $G(N,p)$ satisfies the following conditions.

\begin{definition} [Sparse matrix] \label{def:sperse} Let $q \in (0,N^{1/2})$. A sparse matrix is a real symmetric $N\times N$ matrix $H=H^* \in \bb R^{N \times N}$ whose entries $H_{ij}$ satisfy the following conditions.
	\begin{enumerate}
		\item[(i)] The upper-triangular entries ($H_{ij}\col 1 \leq i \leq j\leq N$) are independent.
		\item[(ii)] We have $\bb E H_{ij}=0$ and $ \bb E H_{ij}^2=(1 + O(\delta_{ij}))/N$ for all $i,j$.
		\item[(iii)] For any $k\geq 3$, we have
		$\bb E|H_{ij}|^k \leq 1/(Nq^{k-2})$
		for all $i,j$. 
	\end{enumerate}
We define the adjacency matrix $A$ by
$$
A = H + f \f e \f e^*\,,
$$
where $\f e \deq N^{-1/2}(1,1,\dots,1)^*$, and $0\leq f \leq q$.
\end{definition}

We define the spectral domain
\begin{equation*}
{\bf S} \equiv{\bf S}\deq \{E+\ii \eta \in \bb C\col N^{-1}< \eta\leq 1\}\,.
\end{equation*}
We always use the notation $z = E + \ii \eta$ for the real and imaginary parts of a spectral parameter $z$.
For $\im z \ne 0$, we define the Stieltjes transform $m$ of the semicircle density $\varrho$ by 
\begin{equation*} 
 m(z)\deq \int \frac{\varrho(\dd x) }{x-z} \,, \qquad \varrho(\dd x) \deq \frac{1}{2\pi} \sqrt{(4-x^2)_+}\,\dd x\,,
\end{equation*}
which is characterized as the unique solution of
\begin{equation*}
m(z) + \frac{1}{ m(z)} + z =0
\end{equation*}
satisfying $\im m(z) \im z>0$. We recall the Green function $G$ from \eqref{def_G}.
Finally we define the fundamental error parameter
\begin{equation} \label{zeta}
\zeta\equiv\zeta(N,r,q,\eta,f)\deq \pbb{\frac{r}{q^2}}^{1/4}+\frac{r}{(N\eta)^{1/6}}+\frac{r}{(\log \eta+\log N)q}+\frac{f}{(N\eta)^{1/4}}\,.
\end{equation}
We now state our main result.

\begin{theorem}[Local law]\label{maintheorem}
There is a universal constant $C_* \geq 1$ such that the following holds.
	Let $A$ be defined as in Definition \ref{def:sperse} and $G$ be its Green function \eqref{def_G}. Let $z \in \f S$. Let $r\geq 10$, $1\leq q \leq N^{1/2}$, and $t>0$. Suppose that
	\begin{equation} \label{t_zeta}
	t\zeta\leq 1\,,
	\end{equation} 
then
	\begin{equation} \label{semiH}
	\bb P \pB{\max_{i,j}|G_{ij}(z)-m(z) \delta_{ij}|>t\zeta} \leq  N^5 \pbb{\frac{C_*}{t}}^r\,.
	\end{equation}
\end{theorem}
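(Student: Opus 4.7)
The plan is to follow the Schur-complement plus self-consistent-equation strategy of \cite{EYY1,EKYY1}, carried out throughout in $L^r$ norms, with the sparse fluctuations controlled by the multilinear large deviation estimates of Section~\ref{sec:LDE}. Setting $\Lambda(z) \deq \max_{i,j} \abs{G_{ij}(z) - m(z)\delta_{ij}}$, the target is an $L^r$ bound $\norm{\Lambda(z)}_r \lesssim \zeta$, from which \eqref{semiH} follows by Markov's inequality applied to $\Lambda^r$ at $r = \log N$; the factor $N^5$ absorbs a union bound over matrix entries together with a small grid in $\f S$ handled by the trivial $O(\eta^{-2})$-Lipschitz continuity of $G_{ij}$.

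For each fixed $i$, Schur's complement formula gives
\begin{equation*}
\frac{1}{G_{ii}(z)} \,=\, A_{ii} - z - \sum_{k,l \ne i} A_{ik}\, G^{(i)}_{kl}(z)\, A_{li}\,,
\end{equation*}
where $G^{(i)}$ is the Green function of the minor indexed by removing row and column $i$. I split the quadratic form into its diagonal piece $\sum_{k \ne i} A_{ik}^2 G^{(i)}_{kk}$, which concentrates around $\frac{1}{N}\tr G^{(i)} \approx m$ by a single-variable sparse large deviation estimate, and its off-diagonal piece $\sum_{k \ne l} A_{ik} G^{(i)}_{kl} A_{il}$, to which the bilinear bound \eqref{lde_intro} is applied with $a_{kl} = G^{(i)}_{kl}$. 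Ward's identity $\sum_l \abs{G^{(i)}_{kl}}^2 = \im G^{(i)}_{kk}/\eta$ controls $\gamma \lesssim ((\im m + \Lambda)/(N\eta))^{1/2}$, while $\psi = \max_{k,l}\abs{G^{(i)}_{kl}}/(pN)$ is controlled via $\Lambda$ and the resolvent identity relating $G^{(i)}$ to $G$. Off-diagonal entries $G_{ij}$ are handled by the parallel two-index Schur identity, and the rank-one shift $f \f e \f e^*$ is absorbed by a Sherman--Morrison identity, contributing the $f$-dependent term in \eqref{zeta}. Assembled, this yields an approximate self-consistent equation
\begin{equation*}
-\frac{1}{G_{ii}(z)} \,=\, z + m(z) + \cal E_i(z),
\end{equation*}
with $\norm{\cal E_i}_r$ bounded by $\zeta$ plus a self-improving $\Lambda$-contribution.

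Stability of $m + 1/m + z = 0$ on $\f S$, which close to the spectral edge costs at worst a factor absorbed into the $r/(N\eta)^{1/6}$ term of $\zeta$, then gives a quadratic inequality of the form $\norm{\Lambda}_r \leq C\pb{\zeta + \norm{\Lambda}_r^2}$, closing to $\norm{\Lambda}_r \lesssim \zeta$ once $\Lambda$ is a priori known to be small. This a priori smallness is supplied by a bootstrap in $\eta$: at $\eta = 1$, $\Lambda$ is bounded deterministically; an $L^r$ bound at scale $\eta$ combined with Markov at $r = \log N$ yields a high-probability bound at that scale, which by the Lipschitz continuity of $G_{ij}$ in $z$ transfers to a slightly smaller scale $\eta' = \eta - N^{-3}$; iterating $N^{O(1)}$ times reaches any $\eta > N^{-1}$. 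The hypothesis \eqref{t_zeta} guarantees we remain in the regime where the quadratic stability applies.

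The hard part will be ensuring that \eqref{lde_intro} is always applied in its favourable regime, i.e.\ where the logarithmic denominator $1 + \log(\psi/\gamma)_+$ is genuinely of order $\log N$. In the critical regime $q^2 \asymp \log N$, the trivial bound $\psi \lesssim \eta^{-1}/(pN)$ is far too weak at the smallest spectral scales, and the bootstrap must feed the improved a priori $\ell^\infty$ control on $G^{(i)}_{kl}$ obtained at the previous iteration back into the estimate of $\psi$ at the next step. It is precisely this interplay between $\psi$ and the prior iterate of $\Lambda$ that prevents a single-pass high-probability argument from succeeding and forces the entire scheme to be executed in $L^r$; this also explains why the prefactor $(C_*/t)^r$ in \eqref{semiH} appears naturally with $r$ logarithmic.
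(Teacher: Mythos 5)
Your outline follows the same route as the paper: Schur complement, the multilinear large deviation estimates of Section~\ref{sec:LDE} applied in $L^r$ with $r=\log N$, and a continuity bootstrap in $\eta$ on an $N^{-O(1)}$ grid. Two points in your formulation would, however, fail if executed literally. First, the target cannot be an unconditional bound $\norm{\Lambda(z)}_r\lesssim\zeta$: on the complement of the good event the entries of $G$ are only bounded by $\eta^{-1}\leq N$, so $\norm{\Lambda}_r$ picks up a contribution of order $N\cdot\P(\text{bad})^{1/r}$, which at $r=\log N$ is of order $N/\xi$ and is not $O(\zeta)$ unless the tail bound is trivial. More importantly, the large deviation propositions require \emph{deterministic} bounds $\gamma,\psi$ on the coefficients $a_{kl}=G^{(i)}_{kl}$, and since these are applied conditionally on $H^{(i)}$, the a priori control must be imposed through an $H^{(i)}$-measurable indicator (the paper's $\phi^{(i)}$, with $\phi\leq\phi^{(i)}$), not through $\Lambda$ itself. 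The correct objects are therefore $\norm{\phi\, Y_iG_{ii}}_r$, $\norm{\phi\, G_{ij}}_r$, etc., where $\phi$ is the indicator of the bootstrap event inherited from the previous grid point; your ``self-improving $\Lambda$-contribution'' inside an unconditional $L^r$ bound does not close.

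Second, the stability step is glossed over where it is genuinely delicate. The self-consistent equation $s^2+zs+1=\frac1N\sum_iG_{ii}Y_i$ only yields $|s-m|\wedge|s-\tilde m|\leq\sqrt{|{\cdot}|}$ for the \emph{two} roots $m,\tilde m$, and near the spectral edge $|m-\tilde m|$ becomes comparable to the error, so one cannot simply ``absorb'' the branch ambiguity into the $r/(N\eta)^{1/6}$ term of $\zeta$. The paper resolves this with a threshold index $\tilde K$: for $\eta$ above it, $|m-\tilde m|$ exceeds the error and the continuity of $s$ in $\eta$ forces the correct branch; below it, the two branches are indistinguishable to within the claimed accuracy and either one suffices. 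Relatedly, the paper does not derive a quadratic inequality for $\norm{\Lambda}_r$; it applies stability only to the average $s$ and then transfers to the diagonal entries via $|G_{ii}-m|\leq|(s-m)G_{ii}|+|Y_iG_{ii}|$ on the event $\Gamma\leq2$, which avoids any self-consistent inequality in $\Lambda$. With these two repairs your argument becomes the paper's proof.
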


\begin{remark}	\label{1.4}	
	The constant $C_*$ in \eqref{semiH} can be chosen to be $1000$; see \eqref{O4.44} below. This numerical value can be improved by more careful estimates; we shall not pursue this here.
\end{remark}

We draw several consequences from Theorem \ref{maintheorem}. For all of them, we assume the upper bound $q \leq (\log N)^{10}$, which can however be relaxed to $q \leq N^{1/2}$ without much sweat; it originates from the last term of \eqref{zeta}, which is a naive estimate of the contribution of the expectation $f\f e \f e^*$ of $A$. This upper bound can be removed by a more careful treatment presented in Section 7 of \cite{EKYY1}. However, since in this paper we are interested in the regime $q \leq (\log N)^3$ not covered by the results of \cite{EKYY1}, we shall not do so.

We obtain a local law under the condition $q \geq C\sqrt{\log N}$, i.e.\ $pN \geq C \log N$. More precisely, by setting $r=\log N$ and $t=C_*\ee^{5+D}$ for some $D>0$ (see Appendix \ref{sec:appendix} for details), we deduce from \eqref{semiH} that for any $D>0,\delta \in (0,1)$ there exist $C\equiv C(\delta,D), N_0\equiv N_0(\delta,D)>0$ such that
\begin{equation} \label{semi} 
\bb P\qB{\max_{i,j}|G_{ij}(z)-m\delta_{ij}(z)|\leq \delta} \geq  1-N^{-D}
\end{equation}
whenever $\eta \geq N^{-1+(\log N)^{-1/2}}$, $C\sqrt{\log N} \leq q \leq (\log N)^{10}$ and $N \geq N_0$. Explicitly, we can choose
\begin{equation} \label{expl_constants}
C(\delta,D)=\bigg(\frac{4C_*\ee ^{5+D}}{\delta}\bigg)^2\,, \qquad N_0(\delta, D) = \exp \qBB{\pbb{\ee^{10}\log \pbb{\frac{4 C_* \ee^{5 + D}}{\delta}}}^2}\,.
\end{equation}

By \eqref{semi} and a standard complex analysis argument (e.g.\ see Section 8 of \cite{BK16}) we have the following result.

\begin{theorem} [Local law for density of states] \label{thm:ind}
	Let $\mu \deq \frac{1}{N} \sum_{i = 1}^N \delta_{\lambda_i(A)}$ be the empirical eigenvalue density of $A$. For $D>0,\delta \in (0,1)$ there exist constants $C\equiv C(\delta,D), N_0\equiv N_0(\delta,D)>0$, given by \eqref{expl_constants}, such that for any interval $I \subset \R$ satisfying $|I| \geq N^{-1+(\log N)^{-1/2}}$, we have
\begin{equation*}
\bb P\Big[|\mu(I)-\varrho(I)|\leq \delta|I|\Big] \geq  1-N^{-D}
\end{equation*}
whenever $C\sqrt{\log N} \leq q \leq (\log N)^{10}$ and $N \geq N_0$.
\end{theorem}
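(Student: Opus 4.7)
The plan is to deduce Theorem~\ref{thm:ind} from the Green function local law \eqref{semi} by a Helffer--Sjöstrand functional calculus argument, following the template of Section~8 of \cite{BK16}. Throughout, write $\eta_0 \deq N^{-1+(\log N)^{-1/2}}$ and $s(z) \deq N^{-1} \tr G(z)$.

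First, I would upgrade the pointwise-in-$z$ bound \eqref{semi} to a simultaneous bound in $z$. Since $z \mapsto G_{ij}(z)$ is Lipschitz with constant $\eta^{-2} \leq N^2$ on $\{\im z \geq \eta_0\}$, a union bound over an $N^{-5}$-net of the rectangle $R \deq \h{E + \ii \eta \col \abs{E} \leq 10,\; \eta_0 \leq \eta \leq 1}$ together with \eqref{semi} yields, on an event of probability $\geq 1 - N^{-D}$,
\begin{equation*}
\sup_{z \in R} \abs{s(z) - m(z)} \leq \delta,
\end{equation*}
after replacing $\delta$ by $\delta/2$ in the application of \eqref{semi} and absorbing the loss into $C(\delta,D)$ and $N_0(\delta,D)$ in \eqref{expl_constants}.

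Second, for $I = [a,b]$ with $\abs{I} \geq \eta_0$, I would introduce a smoothed indicator $\chi_I$ of $I$ at scale $\eta_0$ together with a smooth even cutoff $\chi$ of $[-1,1]$ in the vertical direction, and apply the Helffer--Sjöstrand formula
\begin{equation*}
\chi_I(\lambda) = \frac{1}{\pi}\int_{\R^2} \pa_{\bar z}\qb{\pb{\chi_I(E) + \ii \eta \chi_I'(E)} \chi(\eta)}\, \frac{1}{\lambda - z}\, \dd E\, \dd \eta
\end{equation*}
to each eigenvalue of $A$ and to $\varrho$, producing
\begin{equation*}
\mu(\chi_I) - \varrho(\chi_I) = \frac{1}{\pi}\int_{\R^2} \pa_{\bar z}\qb{\pb{\chi_I(E) + \ii \eta \chi_I'(E)} \chi(\eta)}\, \q{s(z) - m(z)}\, \dd E\, \dd \eta.
\end{equation*}
I would then split the integration region into a bulk part $\h{\eta \geq \eta_0}$, where the simultaneous bound immediately yields a contribution of size $\lesssim \delta \abs{I} (1 + \abs{\log \eta_0})$; and a boundary part $\h{\eta < \eta_0}$, localized within distance $\eta_0$ of $\partial I$, which I would handle by integrating by parts once in $\eta$. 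The integration by parts produces a boundary term at $\eta = \eta_0$, controlled again by the simultaneous bound, and a bulk term involving $\int_0^{\eta_0} \eta\, \pa_\eta \im s(E + \ii \eta)\, \dd \eta$, which is controlled by the monotonicity of $\eta \mapsto \eta \im s(E + \ii \eta)$ together with its value at $\eta = \eta_0$. To pass from the smoothed $\mu(\chi_I) - \varrho(\chi_I)$ to $\mu(I) - \varrho(I)$, I would apply the same Helffer--Sjöstrand bound to smoothed indicators of two $\eta_0$-neighbourhoods of the endpoints $a, b$, which is permissible since these neighbourhoods have length $\eta_0 \ll \abs{I}$.

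The main technical difficulty is the boundary contribution near $\partial I$ on the thin strip $\eta < \eta_0$, where no pointwise control of $s(z) - m(z)$ is available: this is where one must exploit the nonnegativity and monotonicity of $\eta \im s$. This is also the sole reason for the lower bound $\abs{I} \geq \eta_0$ in the statement. Tracking constants through this argument preserves the form \eqref{expl_constants}, possibly at the cost of absorbing logarithmic factors in $N_0$.
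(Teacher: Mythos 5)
Your proposal is correct and follows the same route the paper takes: the paper deduces Theorem \ref{thm:ind} from \eqref{semi} precisely by the standard Helffer--Sj\"ostrand argument of Section 8 of \cite{BK16}, which is what you have reconstructed (simultaneous bound via a net and Lipschitz continuity, bulk/boundary splitting, integration by parts in $\eta$, and monotonicity of $\eta \mapsto \eta \im s(E + \ii\eta)$ on the thin strip). Your write-up supplies the details the paper leaves to the cited reference.
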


\begin{remark}
Previously, in \cite{TVW} a local law for the density of states under the assumption $q \to \infty$ was proved down to scales $\abs{I} \geq \pb{\frac{\log q}{q}}^{1/5}$. This scale is too large to distinguish the presence of isolated vertices, as explained in the introduction. Very recently, in \cite{DZ18} this result was improved to $\abs{I} \geq \frac{\log N}{q^2}$ in the supercritical regime, under the assumption that $I$ is separated from the spectral edges.
\end{remark}

\begin{remark}
Theorem \ref{thm:ind} says nothing about the locations of the extreme eigenvalues. This question was addressed in \cite{BBK1,BBK2}, where it was proved that the extreme eigenvalues converge to the spectral edges in the supercritical regime, and they become outliers in the subcritical regime.
\end{remark}

A standard consequence of Theorem \ref{maintheorem} (e.g.\ in \cite[Theorem 2.10]{BK16}) is the following.

\begin{theorem}[Complete eigenvector delocalization] \label{thm:deloc}
For any $D>0$ there exists $C\equiv C(D)>0, N_0 \equiv N_0(D)$ such that for $q \geq C \sqrt{\log N}$ and $N \geq N_0(D)$,
\begin{equation*}
\p \pbb{\exists i, \norm{\f u_i}_\infty \geq N^{(\log N)^{-1/2}- 1/2}} \leq N^{-D}\,,
\end{equation*} 
where $\f u_i \in \bb S^{N - 1}$ is the $i$-th normalized eigenvector of $A$. (We can take $C(D) = C(1,D)$ and $N_0(D) = N_0(1,D)$ in \eqref{expl_constants}.)
\end{theorem}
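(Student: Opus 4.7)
The plan is the standard reduction from a local law to eigenvector delocalization via the spectral decomposition of the Green function. Let $\lambda_1 \leq \cdots \leq \lambda_N$ denote the eigenvalues of $A$ with corresponding normalized eigenvectors $\f u_1,\dots,\f u_N$. The spectral theorem gives
\begin{equation*}
\im G_{kk}(E + \ii \eta) \;=\; \eta \sum_{j=1}^N \frac{|\f u_j(k)|^2}{(\lambda_j - E)^2 + \eta^2}\,,
\end{equation*}
so that evaluating at $E = \lambda_i$ and retaining only the $j=i$ term yields
\begin{equation*}
|\f u_i(k)|^2 \;\leq\; \eta\, \im G_{kk}(\lambda_i + \ii \eta)\,.
\end{equation*}
It therefore suffices to show that, with probability at least $1-N^{-D}$ and simultaneously for all $i$ and $k$, one has $\im G_{kk}(\lambda_i + \ii \eta) \leq 2$ at the smallest spectral scale $\eta \deq N^{-1 + (\log N)^{-1/2}}$ allowed by \eqref{semi}.

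To control the random spectral parameter $\lambda_i$, I would use a net-and-Lipschitz argument. A crude bound (either the deterministic estimate $\|A\|_{\op}\leq \|\cal A\|_F/\sqrt{p(1-p)N} \leq N$ in the adjacency case, or a Markov estimate in the general setting of Definition~\ref{def:sperse}) confines all eigenvalues to an interval $[-N^{K},N^{K}]$ with probability $\geq 1-N^{-D}$. Introduce a uniform grid $\{E_\alpha\}$ on this interval with spacing $N^{-10}$, giving $O(N^{K+11})$ points. For each fixed $E_\alpha$, the local law \eqref{semi} (applied with $\delta=1/2$) combined with the universal Herglotz bound $\im m(z) \leq 1$ on the upper half-plane yields $\im G_{kk}(E_\alpha + \ii \eta) \leq 3/2$ with probability at least $1-N^{-D'}$. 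A union bound over the polynomially many pairs $(\alpha,k)$ gives the bound simultaneously on the whole grid, provided $D'$ is chosen large enough (which only requires enlarging $C(D)$ in the definition of the constants from \eqref{expl_constants}).

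The pointwise estimate $|\partial_E G_{kk}(E + \ii \eta)| \leq \|G(E+\ii \eta)\|_{\op}^2 \leq \eta^{-2} \leq N^2$ transfers the grid bound to arbitrary $E$ in $[-N^K,N^K]$, inflating the constant from $3/2$ to at most $2$ since the grid spacing $N^{-10}$ times $\eta^{-2}$ is negligible. Substituting into the displayed eigenvector inequality gives $|\f u_i(k)|^2 \leq 2\eta = 2\, N^{-1 + (\log N)^{-1/2}}$; absorbing $\sqrt{2}$ using $\sqrt{2} \leq N^{(\log N)^{-1/2}/2}$ for $N \geq N_0(D)$ and taking a final union bound over $i,k\in\{1,\dots,N\}$ yields the claimed $\|\f u_i\|_\infty < N^{(\log N)^{-1/2}-1/2}$.

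There is no serious obstacle beyond the local law itself: the grid-and-Lipschitz device is entirely routine, and the only reason the delocalization exponent reads $(\log N)^{-1/2} - 1/2$ rather than the classical $-1/2 + o(1)$ is the restriction $\eta \gg N^{-1 + (\log N)^{-1/2}}$ imposed by \eqref{semi}; any improvement in the accessible spectral scale would translate directly into a sharper delocalization bound.
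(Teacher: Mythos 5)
Your argument is correct and is precisely the standard reduction that the paper invokes by citing \cite[Theorem 2.10]{BK16}: bound $|\f u_i(k)|^2/\eta$ by $\im G_{kk}(\lambda_i+\ii\eta)$ at the smallest admissible scale $\eta = N^{-1+(\log N)^{-1/2}}$, control the random spectral parameter by a polynomial grid plus the Lipschitz bound $|\partial_E G_{kk}|\leq \eta^{-2}$, and union bound. The only (immaterial) difference from the paper's stated constants is that your use of \eqref{semi} with $\delta=1/2$ and an enlarged $D'$ for the grid union bound yields a somewhat larger admissible $C(D)$ than the parenthetical choice $C(1,D)$, which is harmless since the theorem only asserts existence.
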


\begin{remark}
Previously, in \cite{TVW} it was proved that with high probability $\norm{\f u_i}_\infty = o(1)$ for $q \geq C \sqrt{\log N}$. For eigenvectors associated with eigenvalues separated from the spectral edges, the authors obtained the stronger estimate $\norm{\f u_i}_\infty = O \pb{(\log (q/\sqrt{\log N}))^{1.1} \sqrt{\log N} / q}$. Very recently, in \cite{DZ18} this was improved to $\norm{\f u_i}_\infty = O(\sqrt{\log N} / q)$ for eigenvalues separated from the spectral edges.
\end{remark}

Moreover, we obtain the following probabilistic version of local quantum unique ergodicity for the Erd\H{o}s-R\'enyi random graph, by combining Theorem \ref{thm:deloc} with \cite[Proposition 8.3]{BKY15}.

\begin{theorem}[Probabilistic local quantum unique ergodicity] \label{thm:QUE}
Let $A$ be the rescaled adjacency matrix of the Erd\H{o}s-R\'enyi graph $G(N,p)$. Let $a_1, \dots, a_N \in \R$ be deterministic numbers satisfying $\sum_{i = 1}^N a_i = 0$.
For any $D>0$ there exists $C\equiv C(D)>0, N_0 \equiv N_0(D)$ such that for $pN \geq C^2 \log N$, $N \geq N_0(D)$, $k = 1, \dots, N$, and $\theta \geq 1$
\begin{equation*}
\sum_{i = 1}^N a_i u_k(i)^2 = O \pbb{\frac{\theta^2 N^{(\log N)^{-1/2}}}{N} \pbb{\sum_{i = 1}^N a_i^2}^{1/2}}
\end{equation*}
with probability at least $1 - N^{-D} - \ee^{-\theta \sqrt{\log \theta}}$. Here $u_k(i)$ is the $i$-th component of the $k$-th eigenvector of $A$.
\end{theorem}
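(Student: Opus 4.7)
The plan is to present this theorem as a direct combination of Theorem \ref{thm:deloc} with the abstract eigenvector concentration estimate supplied by Proposition 8.3 of \cite{BKY15}. The latter, phrased in the generality we need, states that if the eigenvectors of a random sparse graph obey a delocalization bound $\max_k \norm{\f u_k}_\infty \leq \Phi$ on an event of high probability, then for every deterministic vector $\f a = (a_1,\dots,a_N) \in \R^N$ with $\sum_i a_i = 0$, every eigenvector index $k$, and every $\theta \geq 1$, one has
\begin{equation*}
\absbb{\sum_{i=1}^N a_i u_k(i)^2} \;\lesssim\; \theta^2 \, \Phi \, \frac{\norm{\f a}_2}{\sqrt{N}}
\end{equation*}
with probability at least $1 - \ee^{-\theta \sqrt{\log \theta}}$.

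Given $D > 0$, I would first fix $C = C(D)$ and $N_0 = N_0(D)$ as in Theorem \ref{thm:deloc}, so that the hypothesis $pN \geq C^2 \log N$ is equivalent to $q \geq C\sqrt{\log N}$. On the event
\begin{equation*}
\mc E \deq \hB{\max_k \norm{\f u_k}_\infty \leq N^{(\log N)^{-1/2} - 1/2}}\,,
\end{equation*}
which has probability at least $1 - N^{-D}$ by Theorem \ref{thm:deloc}, I would insert $\Phi = N^{(\log N)^{-1/2} - 1/2}$ into the bound above. The arithmetic $\Phi / \sqrt{N} = N^{(\log N)^{-1/2} - 1}$ reproduces exactly the scaling $N^{(\log N)^{-1/2}}/N$ of the target, and a union bound over $\mc E^c$ and the bad event from Proposition 8.3 yields
\begin{equation*}
\absbb{\sum_{i=1}^N a_i u_k(i)^2} \;\lesssim\; \frac{\theta^2 N^{(\log N)^{-1/2}}}{N} \pbb{\sum_{i=1}^N a_i^2}^{1/2}
\end{equation*}
with probability at least $1 - N^{-D} - \ee^{-\theta\sqrt{\log \theta}}$, which is the claimed bound.

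The only nontrivial point is that Proposition 8.3 of \cite{BKY15} was originally formulated for the random regular graph $G_{N,d}$, whereas we work with $G(N,p)$. The core of the argument there is a local edge-resampling procedure combined with a resolvent perturbation estimate: one modifies a small number of edges and controls the resulting change in the quadratic form $\sum_i a_i u_k(i)^2$ using only the delocalization of $\f u_k$ together with a local law. For $G(N,p)$ the resampling step is in fact simpler, since the edges are genuinely independent and no degree constraint must be preserved, and both required inputs---delocalization from Theorem \ref{thm:deloc} and the local law from Theorem \ref{maintheorem}---are already at our disposal. I therefore expect the proof of Proposition 8.3 of \cite{BKY15} to transfer essentially verbatim, with the regular-graph delocalization replaced by Theorem \ref{thm:deloc}. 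Checking this transfer is the main, if routine, obstacle; no new large-deviation or local-law input beyond what has already been used is required.
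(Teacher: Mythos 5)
Your proposal is correct and follows exactly the route the paper takes: Theorem \ref{thm:QUE} is obtained by combining the delocalization bound of Theorem \ref{thm:deloc} (with $\Phi = N^{(\log N)^{-1/2}-1/2}$) with Proposition 8.3 of \cite{BKY15} and a union bound, precisely as you describe. Your only hesitation---whether the cited proposition transfers from $G_{N,d}$ to $G(N,p)$---is not an issue, since that proposition is an abstract statement relying only on exchangeability of the vertex labels and the delocalization input, both of which hold for the Erd\H{o}s--R\'enyi model.
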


Theorem \ref{thm:QUE} states that, on deterministic sets of at least $\pb{\theta^2 N^{(\log N)^{-1/2}}}^2$ vertices, 
all eigenvectors of the random graph $A$ are completely flat with high probability. In other words, with high probability, the random probability measure $i \mapsto v_k(i)^2$
is close (when tested against deterministic test functions) to the uniform probability measure $i \mapsto 1/N$ on $\{1, \dots, N\}$. 
For instance, let $I \subset \{1, \dots, N\}$ be a deterministic subset of vertices. Setting $a_i \deq \ind{i \in I} - \abs{I} / N$
in Theorem \ref{thm:QUE}, we obtain
\begin{equation} \label{e:QUE_example}
\sum_{i \in I} v_k(i)^2 \;=\; \sum_{i \in I} \frac{1}{N} + O \pbb{\frac{\theta^2 N^{(\log N)^{-1/2}} \sqrt{\abs{I}}}{N}}
\end{equation}
with probability at least $1 - N^{-D} - \ee^{-\theta \sqrt{\log \theta}}$.
The main term on the right-hand side of \eqref{e:QUE_example} is much larger than the error term provided that $\abs{I} \gg \pb{\theta^2 N^{(\log N)^{-1/2}}}^2$.

\section{Multilinear large deviation bounds for sparse random vectors} \label{sec:LDE}

The rest of this paper is devoted to the proof of Theorem \ref{maintheorem}.
In this section we derive large deviation estimates for multilinear forms of sparse random vectors with independent components. We focus on the linear and bilinear estimates, which are sufficient for our applications. These estimates are designed to be optimal in the regime of very sparse random vectors.

For $n \in \N^*$ denote by $[n] \deq \{1, 2, \dots, n\}$, and for a finite set $A$ we denote by $\abs{A}$ the cardinality of $A$. For two sets $U,V$, we denote by $V^U$ the set of functions from $U$ to $V$.
For $r \geq 1$ we denote by $\norm{X}_r \deq (\E \abs{X}^r)^{1/r}$ the $L^r$-norm of the random variable $X$.

\begin{proposition} \label{prop:lde1}
Let $r$ be even and $1 \leq q \leq N^{1/2}$. Let $X_1, \dots, X_N \in \C$ be independent random variables satisfying
\begin{equation} \label{moment_conditions_X}
\E X_i = 0\,, \qquad \E \abs{X_i}^k \leq \frac{1}{N q^{k - 2}} \quad (2 \leq k \leq r)\,.
\end{equation}
Let $a_1, \dots, a_N \in \C$ be deterministic, and suppose that
\begin{equation*}
\pbb{\frac{1}{N} \sum_i \abs{a_i}^2}^{1/2} \leq \gamma\,, \qquad \frac{\max_i \abs{a_i}}{q} \leq \psi
\end{equation*}
for some $\gamma,\psi \geq 0$.
Then
\begin{equation*}
\normbb{\sum_i a_i X_i}_r \leq \pbb{\frac{2 r}{1 + 2 (\log (\psi / \gamma))_+} \vee 2} (\gamma \vee \psi)\,.
\end{equation*}
\end{proposition}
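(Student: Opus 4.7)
The plan is a direct $r$-th moment calculation via combinatorial expansion together with the exponential formula for set partitions. Set $S \deq \sum_i a_i X_i$. Since $r$ is even, $\abs{S}^r = (S\bar S)^{r/2}$ is polynomial, and I would expand it as a sum over multi-indices $(i_1,\dots,i_r)$ where $i_s$ carries the factor $a_{i_s}X_{i_s}$ for $s \leq r/2$ and $\bar{a}_{i_s}\bar{X}_{i_s}$ for $s > r/2$. The key observation: by independence and $\E X_i = 0$, the expectation of such a monomial vanishes unless every block of the set partition $\pi$ of $[r]$ induced by the multi-index (two positions lie in the same block iff they carry the same summation index) has size at least $2$. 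For a block $B$ of size $m$ with common index $v$, the moment assumption gives $\abs{\E[\,\cdots]} \leq \E\abs{X_v}^m \leq 1/(Nq^{m-2})$. Writing $\mu_m \deq \sum_v \abs{a_v}^m$ and using $\mu_m \leq (q\psi)^{m-2}\mu_2 \leq (q\psi)^{m-2} N \gamma^2$ for $m \geq 2$, summing the value of each block over all of $[N]$ (an overcount that only strengthens the inequality) gives
\begin{equation*}
\E\abs{S}^r \;\leq\; \sum_\pi \prod_{B \in \pi} \frac{\mu_{\abs{B}}}{N q^{\abs{B}-2}} \;\leq\; \sum_\pi \prod_{B \in \pi} \psi^{\abs{B}-2}\gamma^2,
\end{equation*}
where the sum runs over set partitions of $[r]$ with all blocks of size $\geq 2$.

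Next I would apply the exponential formula for set partitions to identify the sum with a Taylor coefficient:
\begin{equation*}
\sum_\pi \prod_{B \in \pi} \psi^{\abs{B}-2}\gamma^2 \;=\; r! \, [x^r] \exp\!\pbb{\tfrac{\gamma^2}{\psi^2}\pb{e^{\psi x} - 1 - \psi x}}.
\end{equation*}
Since all Taylor coefficients on the right are nonnegative, Cauchy's coefficient estimate $[x^r] F(x) \leq F(R)/R^r$ applies for any $R > 0$, giving
\begin{equation*}
\E\abs{S}^r \;\leq\; \frac{r!}{R^r}\exp\!\pbb{\tfrac{\gamma^2}{\psi^2}\pb{e^{\psi R} - 1 - \psi R}}.
\end{equation*}
Writing $u \deq \psi R$ and optimising over $R$, in the regime $\psi \geq \gamma$ the natural choice is $u = 1 + 2(\log(\psi/\gamma))_+$: this forces $(\gamma^2/\psi^2) e^u = e$, so the exponential factor is $\leq e^e$. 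Combined with Stirling's inequality $r! \leq r^r e^{-r}\sqrt{2\pi r}$, taking $r$-th roots then produces $\norm{S}_r \lesssim r\psi/u$, matching the claimed bound in this regime. In the complementary regime $\psi \leq \gamma$ the claimed bound collapses to $\norm{S}_r \lesssim r\gamma$; here I would take $u = 1$ and use $e - 2 \leq 1$ to keep the exponential factor $O(1)$, and the same Stirling bound concludes.

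The only non-trivial work is the numerical bookkeeping in this last step: verifying that the sharp constant $2$ in the statement survives after combining the Stirling factor, the factor $e^e$, and the chosen $u$. This is routine for $r$ large, where the residual factor $e^{(e-r)/r}(2\pi r)^{1/(2r)}$ tends to $1/e$ as $r \to \infty$; the small-$r$ corner cases are precisely where the $\vee 2$ clause in the prefactor $\frac{2r}{1+2(\log(\psi/\gamma))_+} \vee 2$ earns its keep, by absorbing the mild blow-up of $(\sqrt{2\pi r})^{1/r}$ at small $r$.
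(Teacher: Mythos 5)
Your combinatorial skeleton is exactly the paper's: expand the $r$-th moment, observe that only partitions of $[r]$ with all blocks of size $\geq 2$ survive, bound each block of size $m$ by $\psi^{m-2}\gamma^2$ after dropping the distinctness constraints, and reduce to the sum $\sum_{k=1}^{r/2}S(r,k)\gamma^{2k}\psi^{r-2k}$ over Stirling numbers. Where you genuinely diverge is in estimating that sum: the paper uses the elementary bound $S(r,k)\leq\frac12\binom{r}{k}k^{r-k}$ and then maximizes $f(k)=k^{r-k}\gamma^{2k}\psi^{r-2k}$ by calculus, locating the maximizer $k_*$ through $r/k=1+\log k+2\log(\psi/\gamma)$; you instead package the sum via the exponential formula into $r!\,[x^r]\exp\pb{\tfrac{\gamma^2}{\psi^2}(e^{\psi x}-1-\psi x)}$ and apply a Cauchy/Chernoff radius optimization. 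Your route is conceptually cleaner and makes the saddle-point structure transparent; the paper's is more pedestrian but makes the constant-tracking mechanical.

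Two concrete problems remain in your write-up. First, in the regime $\psi\leq\gamma$ your choice $u=1$ (i.e.\ $R=1/\psi$) does \emph{not} make the exponential factor $O(1)$: it equals $\exp\pb{(e-2)\gamma^2/\psi^2}$, which blows up when $\gamma\gg\psi$ (e.g.\ $\gamma/\psi\gg\sqrt{r}$), and the resulting bound fails. You need to shrink the radius to $R\asymp\sqrt{r}/\gamma$ in that regime (using $e^u-1-u\leq(e-2)u^2$ for $u\leq 1$, the exponent becomes $O(r)$ and one recovers $\norm{S}_r\lesssim\sqrt{r}\,\gamma\leq 2r\gamma$); the paper's corresponding step is the one-line estimate $f(k)^{1/r}\leq r\gamma$. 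Second, the constant bookkeeping in the regime $\psi\geq\gamma$ is not quite as routine as claimed: with the crude bound $e^{e}$ on the exponential factor, the resulting prefactor $(r!)^{1/r}e^{e/r}/u$ exceeds $2r/u$ at $r=2$ (one gets roughly $5.5/u$ versus the claimed $4/u$), and the $\vee\,2$ clause does not rescue this when $u<2$. You must either retain the exact exponent $e-e^{-2L}(2+2L)$ or dispatch $r=2$ separately by the trivial computation $\E\abs{S}^2\leq\gamma^2$. With these repairs the argument closes and matches the stated constant.
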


\begin{proof}
To avoid cumbersome complex conjugates in our notation, we assume for simplicity that all quantities are real-valued. Denote by $\fra P(r)$ the set of partitions of $[r]$, and by $\fra P_{\geq 2}(r)$ the subset of $\fra P(r)$ whose blocks all have size at least two. For $(i_1, \dots, i_r) = [N]^r$ denote by $P(i_1, \dots, i_r) \in \fra P(r)$ the partition of $[r]$ associated with the equivalence relation $k \sim l$ if and only if $i_k = i_l$. Using the identity
\begin{equation*}
1 = \sum_{\Pi \in \fra P(r)} \ind{P(i_1, \dots, i_r) = \Pi}  = \sum_{\Pi \in \fra P(r)} \sum_{s \in [N]^\Pi} \prod_{\pi \neq \tilde \pi \in \Pi} \ind{s_\pi \neq s_{\tilde \pi}} \prod_{\pi \in \Pi} \prod_{k \in \pi} \ind{i_k = s_\pi}\,,
\end{equation*}
the independence of the variables $X_i$, and the fact that $\E X_i = 0$ for all $i$, we find
\begin{equation*}
\normbb{\sum_i a_i X_i}_r^r = \sum_{i_1, \dots, i_r = 1}^N a_{i_1} \cdots a_{i_r} \, \E X_{i_1} \cdots X_{i_r}
\leq \sum_{\Pi \in \fra P_{\geq 2}(r)} \sum_{s \in [N]^\Pi} \prod_{\pi \in \Pi} \abs{a_{s_\pi}}^{\abs{\pi}} \E \abs{X_{s_\pi}}^{\abs{\pi}}\,.
\end{equation*}
Here the restriction $\Pi \in \fra P_{\geq 2}(r)$ follows from the fact that $\E X_i = 0$, and we dropped the indicator function $\prod_{\pi \neq \tilde \pi \in \Pi} \ind{s_\pi \neq s_{\tilde \pi}}$ to obtain an upper bound.
We deduce from
\begin{multline*}
\sum_{s \in [N]^\Pi} \prod_{\pi \in \Pi} \abs{a_{s_\pi}}^{\abs{\pi}} \E \abs{X_{s_\pi}}^{\abs{\pi}} = \prod_{\pi \in \Pi} \sum_{s = 1}^N \abs{a_s}^{\abs{\pi}} \E \abs{X_s}^{\abs{\pi}}
\\
\leq \prod_{\pi \in \Pi} \pbb{\sum_i \abs{a_i}^2} (\max_i \abs{a_i})^{\abs{\pi} - 2} \frac{1}{N q^{\abs{\pi} - 2}} \leq \gamma^{2 \abs{\Pi}} \psi^{r - 2 \abs{\Pi}}
\end{multline*}
that
\begin{equation*}
\normbb{\sum_i a_i X_i}_r^r \leq R_r(\gamma,\psi)\,, \qquad R_r(\gamma,\psi) \deq \sum_{k = 1}^{r/2} S(r,k) \gamma^{2 k} \psi^{r - 2 k}\,,
\end{equation*}
where $S(r,k) \deq \abs{\h{\Pi \in \fra P(r) \col \abs{\Pi} = k}}$ is the Stirling number of the second kind. To conclude the proof, it therefore suffices to prove that
\begin{equation} \label{comb_estimate}
R_r(\gamma,\psi)^{1/r} \leq \pbb{\frac{2 r}{1 + 2 (\log (\psi / \gamma))_+} \vee 2} (\gamma \vee \psi)\,.
\end{equation}

Using the bound $S(r,k) \leq \frac{1}{2} \binom{r}{k} k^{r - k}$, we conclude
\begin{equation*}
R_r(\gamma,\psi) \leq 2^r \max_{1 \leq k \leq r/2} f(k) \,, \qquad f(k) \deq k^{r - k} \gamma^{2k} \psi^{r - 2k}\,.
\end{equation*}
If $\psi \leq \gamma$ then we estimate $f(k)^{1/r} \leq r \gamma$ and \eqref{comb_estimate} follows.

It suffices therefore to assume $\gamma \leq \psi$. We consider the function $f$ on the interval $(0,\infty)$. By differentiation, we find that $\log f$ is concave on $(0,\infty)$ and maximized for
\begin{equation} \label{max_f}
\frac{r}{k} = 1 + \log k + 2 \log (\psi / \gamma)\,.
\end{equation}
Since the left-hand side of \eqref{max_f} is decreasing and its right-hand side increasing, it is easy to see that  \eqref{max_f} has a unique solution $k_*$ in $(0,\infty)$. If $k_* < 1$ then, by concavity of $\log f$, we find that $f$ is decreasing on $[1,\infty)$ and
\begin{equation*}
R_r(\gamma,\psi)^{1/r} \leq 2 f(1)^{1/r} \leq 2 \psi,
\end{equation*}
which is \eqref{comb_estimate}. Let us therefore assume that $k_* \geq 1$. Then we find
\begin{equation*}
R_r(\gamma,\psi)^{1/r} \leq 2 f(k_*)^{1/r} \leq 2 k_* \psi\,.
\end{equation*}
Moreover, for $k_* \geq 1$ from \eqref{max_f} we find that
\begin{equation*}
k_* \leq \frac{r}{1 + 2 \log (\psi / \gamma)}\,,
\end{equation*}
and \eqref{comb_estimate} follows.
\end{proof}


\begin{proposition}\label{prop:critical_bound}
Let $r$ be even and $1 \leq q \leq N^{1/2}$. Let $X_1, \dots, X_N \in \C$ be independent random variables satisfying \eqref{moment_conditions_X}.
For any deterministic $a_1, \dots, a_N \in \C$ we have
\begin{equation*}
\normbb{\sum_i a_i \pb{\abs{X_i}^2 - \E \abs{X_i}^2}}_r \leq 2 \pbb{1 + \frac{2q^2}{N}} \max_i \abs{a_i} \pbb{\frac{r}{q^2} \vee \sqrt{\frac{r}{q^2}}}\,.
\end{equation*}
\end{proposition}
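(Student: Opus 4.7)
The plan is to adapt the partition-expansion strategy of Proposition \ref{prop:lde1}, now applied to the centered independent random variables $Y_i \deq |X_i|^2 - \E|X_i|^2$. Assuming for simplicity that everything is real-valued, the same manipulation as before (group indices by their equivalence classes, use independence, and invoke $\E Y_i = 0$) gives
\begin{equation*}
\normbb{\sum_i a_i Y_i}_r^r \;\leq\; \sum_{\Pi \in \fra P_{\geq 2}(r)} \, \prod_{\pi \in \Pi} \, \sum_{s = 1}^N |a_s|^{|\pi|} \, \E|Y_s|^{|\pi|}\,,
\end{equation*}
where the restriction to $\fra P_{\geq 2}(r)$ comes from $\E Y_i = 0$ and the indicator $\prod_{\pi \neq \tilde\pi} \one_{s_\pi \neq s_{\tilde \pi}}$ is dropped to obtain an upper bound.

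The first key input is a sharp moment bound on $Y_i$. Since $V_i \deq |X_i|^2 \geq 0$, the elementary pointwise inequality $|Y_i|^k = |V_i - \E V_i|^k \leq V_i^k + (\E V_i)^k$ holds, and Jensen then gives $\E|Y_i|^k \leq 2\E V_i^k = 2\E|X_i|^{2k} \leq 2/(Nq^{2k-2})$ for every $k \geq 2$. Writing $M \deq \max_i |a_i|$, the crude bound $\sum_s |a_s|^m \leq N M^m$ shows that each block of size $m$ contributes at most $2M^m q^{-2(m-1)}$, so a partition with $k \deq |\Pi|$ blocks contributes at most $2^k M^r q^{-2(r-k)}$. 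This reduces the task to estimating
\begin{equation*}
M^r \sum_{k=1}^{r/2} S_{\geq 2}(r,k) \, 2^k \, q^{-2(r-k)}\,,
\end{equation*}
where $S_{\geq 2}(r,k)$ counts partitions of $[r]$ into exactly $k$ blocks all of size $\geq 2$.

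The second key input is a generating-function bound on $S_{\geq 2}(r,k)$. From the identity $S_{\geq 2}(r,k) = (r!/k!)[y^r](e^y - 1 - y)^k$ together with the pointwise inequality $e^y - 1 - y \leq \tfrac12 y^2 e^y$ for $y \geq 0$, one extracts $S_{\geq 2}(r,k) \leq \frac{r! \, k^{r-2k}}{k! \, 2^k \, (r-2k)!}$, so that the $2^k$ conveniently cancels the one above. Substituting $j \deq r - 2k$ and using the elementary estimates $(r-j)!/((r-j)/2)! \leq r^{(r-j)/2}$ and $((r-j)/2)^j \leq (r/2)^j$ bounds each term by $\binom{r}{j} 2^{-j} M^r (r/q^2)^{(r+j)/2}$; summing freely over $j \in \{0,\dots,r\}$ via the binomial theorem gives
\begin{equation*}
\normbb{\sum_i a_i Y_i}_r^r \;\leq\; M^r \, (r/q^2)^{r/2} \, \pB{1 + \tfrac12 \sqrt{r/q^2}}^r\,.
\end{equation*}
Taking the $r$-th root and splitting into the cases $r/q^2 \leq 1$ and $r/q^2 \geq 1$ yields $\normb{\sum_i a_i Y_i}_r \leq \tfrac32 M \, (r/q^2 \vee \sqrt{r/q^2})$, which is already stronger than the claim (the factor $2(1 + 2q^2/N)$ in the statement is not tight). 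The main obstacle is the combinatorial step: the generating-function estimate on $(e^y - 1 - y)^k$ must be tailored so that the spurious $2^k$ from the moment bound cancels exactly, and so that the subsequent sum over the number of blocks collapses into a clean binomial expansion producing the correct $(r/q^2) \vee \sqrt{r/q^2}$ dichotomy between the Gaussian and sub-exponential regimes.
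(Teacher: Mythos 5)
Your proof is correct and follows the same moment-method strategy as the paper: expand the $r$-th moment over partitions of $[r]$ into blocks of size at least two, bound the moments of $\abs{X_i}^2 - \E\abs{X_i}^2$ in terms of those of $\abs{X_i}^{2k}$, and then control the resulting combinatorial sum. Your execution is in fact slightly sharper than the paper's --- the pointwise bound $\abs{V-\E V}^k \leq V^k + (\E V)^k$ plus Jensen avoids the $(1+2q^2/N)^r$ factor that the paper picks up from the binomial theorem, and your generating-function estimate on partitions with all blocks of size $\geq 2$, followed by a clean binomial summation over $j = r-2k$, replaces the paper's cruder bound $S(r,k) \leq \tfrac12\binom{r}{k}k^{r-k}$ and a $\max_k$ --- so you end with the constant $\tfrac32$ in place of $2(1+2q^2/N)$.
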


\begin{proof}
Since $\E \abs{X_i}^2 \leq \frac{1}{N}$, a simple estimate using the binomial theorem yields
\begin{equation*}
\E \pB{\abs{X_i}^2 - \E \abs{X_i}^2}^r \leq \frac{1}{N q^{2r - 2}} \pbb{1 + \frac{2q^2}{N}}^r\,.
\end{equation*}
As in the proof of Proposition \ref{prop:lde1}, we conclude that
\begin{align*}
\normbb{\sum_i a_i \pb{\abs{X_i}^2 - \E \abs{X_i}^2}}_r^r &\leq \sum_{k = 1}^{r/2} S(r,k) (\max_i \abs{a_i})^r \frac{1}{q^{2r - 2k}} \pbb{1 + \frac{2q^2}{N}}^r
\\
&\leq \qbb{2 \max_i \abs{a_i} \pbb{1 + \frac{2q^2}{N}}}^r \max_{1 \leq k \leq r/2} \pbb{\frac{k}{q^2}}^{r - k}
\end{align*}
and the claim follows.
\end{proof}

\begin{proposition} \label{prop:lde2}
Let $r$ be even and $1 \leq q \leq N^{1/2}$.
Let $X_1, \dots, X_N, Y_1, \dots, Y_N$ be independent random variables, all satisfying \eqref{moment_conditions_X}. Let $a_{ij} \in \C$, $i,j = 1, \dots, N$ be deterministic, and suppose that
\begin{equation*}
\pbb{\max_i \frac{1}{N} \sum_{j} \abs{a_{ij}}^2}^{1/2} \vee \pbb{\max_j \frac{1}{N} \sum_{i} \abs{a_{ij}}^2}^{1/2} \leq \gamma \,, \qquad  \frac{\max_{i,j} \abs{a_{ij}}}{q^2} \leq \psi
\end{equation*}
for some $\gamma, \psi \geq 0$. Then
\begin{equation} \label{def_gamma_psi_2}
\normbb{\sum_{i,j} a_{ij} X_i Y_j}_r \leq \pbb{\frac{2 r}{1 + (\log (\psi / \gamma))_+} \vee 2}^2 (\gamma \vee \psi)\,.
\end{equation}
\end{proposition}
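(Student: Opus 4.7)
The plan is to follow the same template as Proposition \ref{prop:lde1}, with two adaptations for the bilinear structure: the moment expansion now involves a pair of partitions (one per family of variables), and the inner coefficient contraction has two indices and so requires a Hölder step to reduce to column/row sums. Write $S \deq \sum_{i,j} a_{ij} X_i Y_j$ and assume WLOG everything is real. Expanding $\|S\|_r^r$ and using independence of the $X_i$'s among themselves, of the $Y_j$'s among themselves, and of the two families, then recording the equivalence classes of the $X$-indices by a partition $\Pi_X$ of $[r]$ and those of the $Y$-indices by $\Pi_Y$, the vanishing-mean property forces $\Pi_X, \Pi_Y \in \fra P_{\geq 2}(r)$. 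This gives
\begin{equation*}
\|S\|_r^r \leq \sum_{\Pi_X, \Pi_Y \in \fra P_{\geq 2}(r)} T(\Pi_X, \Pi_Y)\, M(\Pi_X)\, M(\Pi_Y),
\end{equation*}
where $M(\Pi) \deq \prod_{\pi \in \Pi}\E|X|^{|\pi|}$ and $T(\Pi_X, \Pi_Y) \deq \sum_{s \in [N]^{\Pi_X},\, t \in [N]^{\Pi_Y}} \prod_{\pi_X, \pi_Y} |a_{s_{\pi_X}, t_{\pi_Y}}|^{c(\pi_X, \pi_Y)}$ with $c(\pi_X, \pi_Y) \deq |\pi_X \cap \pi_Y|$.

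The core estimate is the bound on $T$. Summing the $t$-indices one block at a time, I would apply Hölder's inequality with conjugate exponents $|\pi_Y|/c(\pi_X, \pi_Y)$ (which sum to $1$ because $\sum_{\pi_X} c(\pi_X, \pi_Y) = |\pi_Y|$) to get
\begin{equation*}
\sum_j \prod_{\pi_X} |a_{s_{\pi_X}, j}|^{c(\pi_X, \pi_Y)} \leq \prod_{\pi_X} \Bigl(\sum_j |a_{s_{\pi_X}, j}|^{|\pi_Y|}\Bigr)^{c(\pi_X, \pi_Y)/|\pi_Y|} \leq (\psi q^2)^{|\pi_Y| - 2}\, N\gamma^2,
\end{equation*}
using $|\pi_Y| \geq 2$ and the column-sum hypothesis. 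Taking the product over $\pi_Y$ and summing $s \in [N]^{\Pi_X}$ freely yields $T \leq N^{k_X + k_Y} \gamma^{2k_Y}(\psi q^2)^{r - 2k_Y}$, and the symmetric estimate $T \leq N^{k_X + k_Y} \gamma^{2k_X}(\psi q^2)^{r - 2k_X}$ follows by summing $s$ first via the row-sum hypothesis. Since the hypotheses force $\gamma^2 \leq \max_i \frac{1}{N}\sum_j |a_{ij}|^2 \leq (\max |a_{ij}|)^2 = (\psi q^2)^2$, we have $\gamma \leq \psi q^2$, so the sharper bound uses $\max(k_X, k_Y)$. Combining with $M(\Pi_X) M(\Pi_Y) \leq N^{-(k_X + k_Y)} q^{-2(r - k_X - k_Y)}$, the $N$-powers cancel, a residual factor $q^{-2(\max - \min)} \leq 1$ is discarded, and one obtains
\begin{equation*}
\|S\|_r^r \leq \sum_{k_X, k_Y = 1}^{r/2} S(r, k_X)\, S(r, k_Y)\, \gamma^{2\max(k_X, k_Y)}\, \psi^{r - 2\max(k_X, k_Y)}.
\end{equation*}

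Applying $S(r, k) \leq \frac{1}{2}\binom{r}{k} k^{r-k}$ and $\sum_k \binom{r}{k} = 2^r$ to each Stirling-number sum independently bounds the right-hand side by $4^{r-1} \max_{k_X, k_Y} g(k_X, k_Y)$, where $g \deq k_X^{r-k_X} k_Y^{r-k_Y} \gamma^{2\max}\psi^{r - 2\max}$. I would then maximize $g$ just as in Proposition \ref{prop:lde1}: for $\gamma \leq \psi$, concavity of $\log g$ and a case analysis on the region $k_X \leq k_Y$ show that the unconstrained critical point actually places $k_X > k_Y$, so the constrained maximum lies on the diagonal $k_X = k_Y = k^*$, where the stationarity condition
\begin{equation*}
\frac{r}{k^*} = 1 + \log k^* + \log\frac{\psi}{\gamma}
\end{equation*}
gives $k^* \leq r/(1 + \log(\psi/\gamma))$ and hence $g^{1/r} \leq k^{*2} \psi$; the boundary cases ($k^* < 1$, i.e.\ very large $\psi/\gamma$, and $\gamma \geq \psi$) produce the $\vee 2$ branch. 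The main obstacle is this final maximization: both the coefficient $1$ in the logarithmic denominator (versus $2$ in Proposition \ref{prop:lde1}) and the outer square must be tracked carefully. The coefficient $1$ arises because the effective index entering the stationarity equation is $\max(k_X, k_Y)$ rather than $k_X + k_Y$, so only a single $\log(\psi/\gamma)$ appears; the square in $(2r/(1+\log(\psi/\gamma)))^2$ comes from the two independent Stirling-number sums contributing multiplicatively.
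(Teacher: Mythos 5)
Your argument is correct, and it reaches the stated bound by a genuinely different route at the key technical step. The paper estimates the coefficient contraction $\val(G(\Pi_1,\Pi_2))$ by a peeling algorithm on the bipartite multigraph: repeatedly delete either an edge at cost $A=\max_{i,j}|a_{ij}|$ or a vertex of degree $\leq 2$ at cost $a^{\deg}$, and an excess-degree count shows that at least $k_1+k_2$ edges are removed at cost $a$, giving $\val\leq A^{r-k_1-k_2}a^{k_1+k_2}$; the double Stirling sum then factors exactly as $R_r(\sqrt{\gamma},\sqrt{\psi})^2$, so the one-variable estimate \eqref{comb_estimate} from Proposition \ref{prop:lde1} can be recycled verbatim. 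Your H\"older step on the inner index instead yields the per-term bound $a^{2\max(k_X,k_Y)}A^{r-2\max(k_X,k_Y)}$, which (since $2\max(k_X,k_Y)\geq k_X+k_Y$ and $a\leq A$) is termwise at least as strong, but forces you to redo the optimization in two variables; your reduction of that maximization to the diagonal is sound, because on each half $\{k_X\leq k_Y\}$ the objective factors as $h_1(k_X)h_2(k_Y)$ with log-concave factors whose unconstrained maximizers violate the constraint. So the paper buys a shorter endgame by arranging an exact square, while you buy a sharper intermediate bound at the cost of a fresh (but elementary) two-dimensional optimization; both land on the same constant. One slip to correct in writing it up: the chain $\gamma^2\leq\max_i\frac{1}{N}\sum_j|a_{ij}|^2\leq(\max_{i,j}|a_{ij}|)^2=(\psi q^2)^2$ has the hypotheses backwards ($\gamma$ and $\psi$ are \emph{upper} bounds, so neither the first inequality nor the final equality is implied), but you do not actually need $\gamma\leq\psi q^2$: simply always select whichever of your two bounds on $T$ carries the exponent $\max(k_X,k_Y)$, and then, as you note, the residual factor $q^{2(k_X+k_Y)-4\max(k_X,k_Y)}=q^{-2|k_X-k_Y|}\leq 1$ can be discarded.
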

\begin{proof}
As in the proof of Proposition \ref{prop:lde1}, to simplify notation we assume that all quantities are real-valued. We write
\begin{equation*}
\normbb{\sum_{i,j} a_{ij} X_i Y_j}_r^r = \sum_{i_1, \dots, i_r} \sum_{j_1, \dots, j_r} a_{i_1 j_1} \cdots a_{i_r j_r} \E X_{i_1} Y_{j_1} \cdots X_{i_r}  Y_{j_r}\,.
\end{equation*}
Analogously to the proof of Proposition \ref{prop:lde1}, we encode the terms on the right-hand side using two partitions $\Pi_1, \Pi_2 \in \fra P_{\geq 2}(r)$, by requiring that $k,l \in [r]$ are in the same block of $\Pi_1$ if and only if $i_k = i_l$, and $k,l \in [r]$ are in the same block of $\Pi_2$ if and only if $j_k = j_l$. Each block of $\Pi_1$ and $\Pi_2$ has size at least two. We encode each pair $(\Pi_1, \Pi_2) \in \fra P_{\geq 2}(r)^2$ using a multigraph $G(\Pi_1, \Pi_2)$, whose vertex set is $\Pi_1 \sqcup \Pi_2$ and whose set of edges is obtained by adding, for each $k = 1, \dots, r$, an edge $\{\pi_1, \pi_2\}$ where $k \in \pi_1 \in \Pi_1$ and $k \in \pi_2 \in \Pi_2$.

It is immediate that $G(\Pi_1,\Pi_2)$ has $r$ edges, that each vertex has degree at least two, and that $G(\Pi_1,\Pi_2)$ is bipartite. Thus we find
\begin{equation} \label{lde_est_exp}
\normbb{\sum_{i,j} a_{ij} X_i Y_j}_r^r \leq \sum_{\Pi_1, \Pi_2 \in \fra P_{\geq 2}(r)} \sum_{s \in [N]^{\Pi_1 \sqcup \Pi_2}} \pBB{\prod_{\{\pi_1, \pi_2\} \in E(G(\Pi_1, \Pi_2))} \abs{a_{s_{\pi_1} s_{\pi_2}}}} \pBB{\prod_{\pi \in \Pi_1 \sqcup \Pi_2} \frac{1}{N q^{\abs{\pi} - 2}}}\,.
\end{equation}
To estimate the right-hand side of \eqref{lde_est_exp}, we construct an algorithm that successively sums out the variables $s_\pi$. For a multigraph $G = (V(G), E(G))$ define
\begin{equation*}
\val(G) \deq \frac{1}{N^{\abs{V(G)}}} \sum_{s \in [N]^{V(G)}} \pBB{\prod_{\{\pi_1, \pi_2\} \in E(G)} \abs{a_{s_{\pi_1} s_{\pi_2}}}}\,
\end{equation*}
so that \eqref{lde_est_exp} becomes
\begin{equation}
\normbb{\sum_{i,j} a_{ij} X_i Y_j}_r^r \leq \sum_{\Pi_1, \Pi_2 \in \fra P_{\geq 2}(r)} \frac{1}{q^{2r - 2 \abs{\Pi_1} - 2 \abs{\Pi_2}}} \val (G(\Pi_1, \Pi_2))
\end{equation}

We shall need the following notions.
We denote by $x(G) \deq \sum_{v \in V(G)} (\deg_G(v) - 2)_+$ the total excess degree of a multigraph $G$.  We fix $\Pi_1, \Pi_2 \in \fra P_{\geq 2}(r)$ and abbreviate $k \deq \abs{\Pi_1} + \abs{\Pi_2}$. Abbreviate
\begin{equation*}
a \deq \pbb{\max_i \frac{1}{N} \sum_{j} \abs{a_{ij}}^2}^{1/2} \vee \pbb{\max_j \frac{1}{N} \sum_{i} \abs{a_{ij}}^2}^{1/2}\,, \qquad A \deq \max_{i,j} \abs{a_{ij}}\,.
\end{equation*}

We construct a sequence $G_0, G_1, \dots, G_L$ of multigraphs recursively as follows. First, $G_0 \deq G(\Pi_1, \Pi_2)$. Since $\deg_{G_0}(v) \geq 2$ for all $v \in V(G_0)$, we immediately find $x(G_0) = 2r - 2k$. For $\ell \geq 0$, the multigraph $G_{\ell+1}$ is constructed from $G_\ell$ according to the two following cases.

\begin{itemize}
\item[1.]
$G_\ell$ has no vertex of degree $\leq 2$. Choose an arbitrary $e \in E(G_{\ell})$ and let $G_{\ell + 1}$ be the multigraph obtained from $G_\ell$ by removing the edge $e$. Clearly,
\begin{equation*}
x(G_{\ell+1}) = x(G_{\ell}) - 2\,, \qquad \val(G_{\ell}) \leq A \val(G_{\ell + 1})\,.
\end{equation*}

\item[2.]
$G_\ell$ has a vertex $v$ of degree $\leq 2$. Let $G_{\ell + 1}$ be the multigraph obtained from $G_\ell$ by removing the vertex $v$ and all $\deg_{G_{\ell}}(v)$ edges incident to it. Clearly,
\begin{equation*}
x(G_{\ell+1}) \leq x(G_{\ell})\,, \qquad \val(G_{\ell}) \leq a^{\deg_{G_\ell}(v)} \val(G_{\ell + 1})\,,
\end{equation*}
where the second estimate follows by Cauchy-Schwarz and the fact that $\deg_{G_\ell}(v) = 0,1,2$.
\end{itemize}

It is easy to see that the algorithm terminates at $G_L = (\emptyset, \emptyset)$, the empty graph with $\val(G_L) = 1$.

Next, for $i = 1,2$, let $r_i$ be the total number of edges removed by the algorithm in all steps of case $i$. Clearly, $r_1 + r_2 = r$. Moreover, since $x(G_L) \geq 0$, we find that $x(G_0) - 2 r_1 \geq 0$, i.e.\ $r_1 \leq r - k$, which implies $r_2 \geq k$. We conclude that
\begin{equation*}
\val(G_0) \leq A^{r_1} a^{r_2} \leq A^{r - k} a^k\,,
\end{equation*}
where we used that $a \leq A$. Hence, with $k_i \deq \abs{\Pi_i}$ (so that $k = k_1 + k_2$), we have
\begin{equation*}
\frac{1}{q^{2r - 2k_1 - 2k_2}} \val (G(\Pi_1, \Pi_2)) \leq \pbb{\frac{A}{q^2}}^{r - k_1 - k_2} a^{k_1+k_2}\,.
\end{equation*}
Summarizing, we have
\begin{equation*}
\normbb{\sum_{i,j} a_{ij} X_i Y_j}_r^r \leq \sum_{k_1,k_2 = 1}^{r/2} S(r,k_1) S(r,k_2) \pbb{\frac{A}{q^2}}^{r - k_1 - k_2} a^{k_1+k_2} \leq \pBB{\sum_{k = 1}^{r/2}S(r,k) \sqrt{\gamma}^{\, 2k} \sqrt{\psi}^{\, r - 2k}}^2\,.
\end{equation*}
The claim now follows from applying the estimate \eqref{comb_estimate} to $R_r(\sqrt{\gamma}, \sqrt{\psi})$.
\end{proof}

\begin{remark}
It might be tempting to try to prove an analogous result with the parameter $\gamma$ in \eqref{def_gamma_psi_2} replaced with the smaller $\frac{1}{N^2} \sum_{i,j} \abs{a_{ij}}^2$. However, in this case the moment-based argument used in the proof of Proposition \ref{prop:lde2} does not lead to a prefactor that is small in the targeted regime $q^2 \gtrsim r$, $\max_{i,j} \abs{a_{ij}} \asymp 1$, $\max_i \frac{1}{N} \sum_{j}\abs{a_{ij}}^2 \asymp N^{-c}$, which corresponds to the typical behaviour of the parameters $q,r,a_{ij}$ in the applications to proving a local law for supercritical graphs (see Section \ref{sec:proof} below).  Indeed, consider the case where $\Pi_1$ is arbitrary and $\Pi_2$ consists of a single block. There are an order $(Cr)^r$ partitions $\Pi_1$. If we only allow estimates in terms of $\frac{1}{N^2} \sum_{i,j} \abs{a_{ij}}^2$ and $\max_{i,j} \abs{a_{ij}}$, then each such partition yields a contribution of order
\begin{equation*}
\frac{1}{N^2}\sum_{i,j} \abs{a_{ij}}^2 \pbb{\frac{\max_{i,j} \abs{a_{ij}}}{q}}^{r-2}
\end{equation*}
Thus, the total contribution of such pairings is of the order $N^{-c} q^2 \pB{\frac{Cr}{q}}^{r}$, which is much too large.

This observation leads us believe that $\max_i \frac{1}{N} \sum_{j} \abs{a_{ij}}^2$ is the correct quantity, instead of the smaller $\frac{1}{N^2} \sum_{i,j} \abs{a_{ij}}^2$, when deriving large deviation estimates in the very sparse regime.
\end{remark}

\begin{proposition} \label{prop:lde3}
Let $r$ be even and $1 \leq q \leq N^{1/2}$.
Let $X_1, \dots, X_N$ be independent random variables satisfying \eqref{moment_conditions_X}. Let $a_{ij} \in \C$, $i,j = 1, \dots, N$ be deterministic, and suppose that
\begin{equation*}
\pbb{\max_i \frac{1}{N} \sum_{j} \abs{a_{ij}}^2}^{1/2} \vee \pbb{\max_j \frac{1}{N} \sum_{i} \abs{a_{ij}}^2}^{1/2} \leq \gamma \,, \qquad  \frac{\max_{i,j} \abs{a_{ij}}}{q^2} \leq \psi
\end{equation*}
for some $\gamma, \psi \geq 0$. Then
\begin{equation} \label{def_gamma_psi_3}
\normbb{\sum_{i \neq j} a_{ij} X_i X_j}_r \leq \pbb{\frac{4 r}{1 + (\log (\psi / \gamma))_+} \vee 4}^2 (\gamma \vee \psi)\,.
\end{equation}
\end{proposition}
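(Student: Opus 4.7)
My plan is to deduce Proposition \ref{prop:lde3} from Proposition \ref{prop:lde2} by a standard decoupling argument: I would reduce the diagonal-free quadratic form $\sum_{i \neq j} a_{ij} X_i X_j$ to a genuinely bilinear form in two independent groups of variables, at the cost of a universal factor of $4$ in the $L^r$ norm. This factor is precisely what is needed to convert the prefactor $\pb{\frac{2r}{1+(\log(\psi/\gamma))_+} \vee 2}^2$ of Proposition \ref{prop:lde2} into the prefactor $\pb{\frac{4r}{1+(\log(\psi/\gamma))_+} \vee 4}^2$ claimed here.

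The first step is the decoupling identity. Let $I \leqs [N]$ be a uniformly random subset (each index included independently with probability $1/2$), chosen independently of $(X_i)$. Since $\P(i \in I,\, j \in I^c) = 1/4$ for every ordered pair $i \neq j$, one has
\begin{equation*}
\sum_{i \neq j} a_{ij} X_i X_j \;=\; 4 \, \E_I \sum_{i \in I,\, j \in I^c} a_{ij} X_i X_j\,.
\end{equation*}
Applying Jensen's inequality (or Minkowski) to the inner $\E_I$ yields
\begin{equation*}
\normbb{\sum_{i \neq j} a_{ij} X_i X_j}_r \;\leq\; 4 \, \max_{I \leqs [N]} \normbb{\sum_{i \in I,\, j \in I^c} a_{ij} X_i X_j}_r\,.
\end{equation*}

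For each fixed $I$, the families $(X_i)_{i \in I}$ and $(X_j)_{j \in I^c}$ are independent because $I$ and $I^c$ are disjoint. I would therefore invoke Proposition \ref{prop:lde2} for the restricted bilinear form with coefficients $\tilde a_{ij} \deq a_{ij} \one(i \in I,\, j \in I^c)$, padding the variables by zero so that both index sets range over $[N]$. The key observation is that the restriction to $I \times I^c$ only shrinks the relevant norms of the coefficient array: one has $\max_i \tfrac{1}{N}\sum_j \abs{\tilde a_{ij}}^2 \leq \gamma^2$, $\max_j \tfrac{1}{N}\sum_i \abs{\tilde a_{ij}}^2 \leq \gamma^2$, and $\max_{i,j}\abs{\tilde a_{ij}} \leq q^2 \psi$. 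Thus Proposition \ref{prop:lde2} delivers a bound of $\pb{\frac{2r}{1+(\log(\psi/\gamma))_+} \vee 2}^2 (\gamma \vee \psi)$, uniformly in $I$.

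Combining the two steps and using the elementary identity $4\,(a \vee 2)^2 = (2a \vee 4)^2$ with $a = \frac{2r}{1+(\log(\psi/\gamma))_+}$ yields exactly \eqref{def_gamma_psi_3}. I do not expect any serious obstacle here: the decoupling identity is elementary, the hypotheses on $\gamma$ and $\psi$ survive verbatim when restricting the coefficient array to $I \times I^c$, and the numerical constant $4$ in the decoupling is tuned so that the prefactor from Proposition \ref{prop:lde2} upgrades precisely to the one asserted here.
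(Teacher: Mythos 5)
Your proposal is correct and is essentially the paper's own argument: the paper also proves this by decoupling into a genuine bilinear form over a partition $I \sqcup J = [N]$ (averaging over all $2^N-2$ such partitions with weight $2^{-(N-2)}$, which is exactly your factor $4$) and then invoking Proposition \ref{prop:lde2}. The only cosmetic difference is that you phrase the averaging via a uniformly random subset $I$ rather than an explicit normalized sum over partitions.
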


\begin{proof}
Using the simple decoupling argument from the proof of \cite[Lemma B.4]{EKYY3}, we find
\begin{equation*}
\normbb{\sum_{i \neq j} a_{ij} X_i X_j}_r \leq \frac{1}{2^{N - 2}} \sum_{I \sqcup J = [N]} \normbb{\sum_{i \in I} \sum_{j \in J} a_{ij} X_i X_j}_r\,,
\end{equation*}
where on the right-hand side the sum ranges over disjoint nonempty sets $I,J$ whose union is $[N]$. The $L^r$-norm on the right-hand side can be estimated using Proposition \ref{prop:lde2}, and the sum $\sum_{I \sqcup J = [N]}$ yields a factor $2^N - 2$. Hence the claim follows.

Alternatively, we can repeat the proof of Proposition \ref{prop:lde2} almost verbatim, except that instead of having a bipartite multigraph encoded by two partitions, $\Pi_1, \Pi_2 \in \fra P_{\geq 2}(r)$, we have a multigraph without loops encoded by the single partition $\Pi \in \fra P_{\geq 2}(2r)$.
\end{proof}

\section{Proof of Theorem \ref{maintheorem}}  \label{sec:proof}

In this section we give the proof of Theorem \ref{maintheorem}. Throughout this section we frequently omit the spectral parameter $z$ from our notation, unless this leads to confusion. We start with a few standard tools.

\begin{definition}
	For $k \in [N]$ we define $A^{(k)}$ as the $(N-1)\times(N-1)$ matrix
	\begin{equation}
	A^{(k)}=(A_{ij})_{i,j\in [N] \setminus\{k\}}\,.
	\end{equation}
	Moreover, we define the Green function of $A^{(k)}$ through
	\begin{equation}
	G^{(k)}(z)=(A^{(k)}-z)^{-1}\,.
	\end{equation}
	We also abbreviate
	\begin{equation}
	\sum_{i}^{(k)}\deq\sum_{i\col i\ne k}\,.
	\end{equation}
\end{definition}

By applying the resolvent identity to $G - G^{*}$, we get the Ward identity
\begin{equation} \label{Ward}
\sum_j |G_{ij}|^2 \;=\; \frac{\im G_{ii}}{\eta} \,,
\end{equation}
which of course also holds for $G^{(k)}$ for any $k \in [N]$.

The following variant of Schur's complement formula is standard and its proof is given e.g.\ in \cite{BK16}.

\begin{lemma}\label{Schur}
For $i \neq j$ we have
\begin{equation} \label{schurix}
G_{ij}=-G_{jj}\sum_{k}^{(j)}G^{(j)}_{ik}A_{kj} = -G_{ii}\sum_{k}^{(i)}A_{ik}G^{(i)}_{kj}\,.
\end{equation}
For $i,j \neq k$ we have
\begin{equation} \label{O3.11}
G_{ij}^{(k)}=G_{ij} - \frac{G_{ik} G_{kj}}{G_{kk}}\,.
\end{equation}
Finally,
\begin{equation} \label{SPSF} 
\ff{G_{ii}} = A_{ii}-z-\sum_{k,l}^{(i)}A_{ik}G^{(i)}_{kl}A_{l i}
\end{equation} 
for all $i$.
\end{lemma}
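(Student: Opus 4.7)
The three identities are standard consequences of block matrix inversion, so the plan is to derive all of them from a single block decomposition of $A-z$ relative to one coordinate.

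First I would fix $i \in [N]$ and, after reordering the rows and columns so that index $i$ appears first, write
\begin{equation*}
A - z = \begin{pmatrix} A_{ii}-z & \f a_i^* \\ \f a_i & A^{(i)}-z \end{pmatrix},
\end{equation*}
where $\f a_i = (A_{ki})_{k \neq i}$. Applying the standard Schur complement formula for the inverse of a block matrix, the $(1,1)$-entry of $(A-z)^{-1}$ reads
\begin{equation*}
G_{ii} \;=\; \Bigl(A_{ii}-z - \f a_i^* (A^{(i)}-z)^{-1} \f a_i\Bigr)^{-1} \;=\; \Bigl(A_{ii}-z - \sum_{k,l}^{(i)} A_{ik} G^{(i)}_{kl} A_{li}\Bigr)^{-1},
\end{equation*}
which rearranges to \eqref{SPSF}. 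The reciprocal is well-defined because $\im z = \eta > 0$ forces $\im G_{ii} > 0$. The same block formula expresses the off-diagonal block of $(A-z)^{-1}$ as $-G_{ii} \f a_i^* (A^{(i)}-z)^{-1}$, whose $j$-th entry gives $G_{ij} = -G_{ii} \sum_{k}^{(i)} A_{ik} G^{(i)}_{kj}$ for $j \neq i$. The second half of \eqref{schurix} follows by swapping $i$ and $j$ and using the symmetry $G = G^\top$ (equivalently, by performing the block decomposition with respect to $j$ instead of $i$).

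For the resolvent identity \eqref{O3.11}, the cleanest route is a direct verification: fix $k$ and define $M_{ij} \deq G_{ij} - G_{ik} G_{kj}/G_{kk}$ for $i,j \neq k$, and check that $M$ satisfies $(A^{(k)}-z) M = I^{(k)}$, which by uniqueness forces $M = G^{(k)}$. The check reduces to taking the row $i$ of $(A-z) G = I$ restricted to $j \neq k$, namely $\sum_l (A-z)_{il} G_{lj} = \delta_{ij}$, splitting off the $l=k$ term, and dividing out $G_{kk}$ using the identity for $G_{ik}$ already proved in \eqref{schurix}; the term involving $A_{ik}$ combines with the rest to produce precisely $(A^{(k)}-z) M$.

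I do not foresee any real obstacle: the argument is pure linear algebra and the only thing to watch is the invertibility of the relevant Schur complements, which is automatic from $\eta > 0$ since each diagonal Green function entry has strictly positive imaginary part. No probabilistic input or properties of the sparse model are needed at this stage.
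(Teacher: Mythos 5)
Your proposal is correct and follows the standard Schur complement / block-inversion argument that the paper itself defers to (it cites \cite{BK16} rather than writing out a proof); the block decomposition yields \eqref{SPSF} and \eqref{schurix}, and your direct verification that $M_{ij} = G_{ij} - G_{ik}G_{kj}/G_{kk}$ inverts $A^{(k)}-z$ goes through (in fact it needs only the row identity $\sum_l (A-z)_{il}G_{lj}=\delta_{ij}$ applied with $j$ and with $k$, not \eqref{schurix}). The invertibility remark is also fine, since for $\eta>0$ the Schur complement has imaginary part at most $-\eta$.
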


Define the Stieltjes transform of the empirical eigenvalue measure as
\begin{equation*}
s \deq \frac{1}{N} \sum_{i = 1}^N G_{ii}\,.
\end{equation*}
From \eqref{SPSF} we easily deduce the following result, which serves as the starting point of our analysis of the diagonal entries of $G$.

\begin{lem} \label{lem11}
We have 
\begin{equation} \label{O4.1}
\ff{G_{ii}} = -z-s+Y_i\,,
\end{equation}
where 
\begin{multline} \label{def_Y_A_Z}
Y_i \deq H_{ii}+\ff{N}\sum_k\frac{G_{ki}G_{ik}}{G_{ii}}-\sum_{k\ne l}^{(i)}H_{ik}G^{(i)}_{kl}H_{l i}-\sum_{k}^{(i)}\Big(H_{ik}^2-\frac{1}{N}\Big)G_{kk}^{(i)}\\
+
\frac{f}{N}-\Big(\frac{f}{N}\Big)^2\sum_{k,l}^{(i)}G_{kl}^{(i)}-\frac{f}{N}\sum_{k,l}^{(i)}G_{kl}^{(i)}(H_{ik}+H_{li})\,.
\end{multline}
\end{lem}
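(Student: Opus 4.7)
The proof is a direct algebraic expansion starting from the Schur complement formula \eqref{SPSF}, with a bookkeeping step to convert $\sum_k^{(i)} G^{(i)}_{kk}$ into $N s$ using \eqref{O3.11}.

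First, the plan is to substitute $A_{ik} = H_{ik} + f/N$ into \eqref{SPSF}, so that $A_{ii} - z = H_{ii} + f/N - z$, and
\begin{equation*}
\sum_{k,l}^{(i)} A_{ik} G^{(i)}_{kl} A_{li} = \sum_{k,l}^{(i)} H_{ik} G^{(i)}_{kl} H_{li} + \frac{f}{N}\sum_{k,l}^{(i)} G^{(i)}_{kl}(H_{ik}+H_{li}) + \pbb{\frac{f}{N}}^2\sum_{k,l}^{(i)} G^{(i)}_{kl}.
\end{equation*}
These last two terms match the $f$-dependent contributions in \eqref{def_Y_A_Z} exactly (with the required signs after moving to the right-hand side of \eqref{O4.1}). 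So the work reduces to handling the $H$-quadratic term.

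Second, I split the $H$-quadratic term by whether $k = l$ and add and subtract $1/N$ on the diagonal,
\begin{equation*}
\sum_{k,l}^{(i)} H_{ik} G^{(i)}_{kl} H_{li} = \sum_{k\ne l}^{(i)} H_{ik} G^{(i)}_{kl} H_{li} + \sum_k^{(i)}\pB{H_{ik}^2 - \tfrac{1}{N}} G^{(i)}_{kk} + \frac{1}{N}\sum_k^{(i)} G^{(i)}_{kk}.
\end{equation*}
The first two terms on the right, once negated, are precisely the $H$-contributions to $Y_i$ in \eqref{def_Y_A_Z}.

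Third, and this is the only step requiring a small computation, I convert $\frac{1}{N}\sum_k^{(i)} G^{(i)}_{kk}$ into $s$ plus a correction using \eqref{O3.11}:
\begin{equation*}
\frac{1}{N}\sum_k^{(i)} G^{(i)}_{kk} = \frac{1}{N}\sum_k^{(i)} G_{kk} - \frac{1}{N}\sum_k^{(i)} \frac{G_{ki} G_{ik}}{G_{ii}} = s - \frac{1}{N} G_{ii} - \frac{1}{N}\sum_k \frac{G_{ki} G_{ik}}{G_{ii}} + \frac{1}{N} G_{ii},
\end{equation*}
where in the last step I added and subtracted the $k = i$ term (which equals $G_{ii}$ since $G_{ii}^2/G_{ii} = G_{ii}$). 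The two $\frac{1}{N}G_{ii}$ terms cancel, leaving $s - \frac{1}{N}\sum_k \frac{G_{ki} G_{ik}}{G_{ii}}$. Negating and collecting everything produces the $-s$ on the right-hand side of \eqref{O4.1} together with the term $\frac{1}{N}\sum_k \frac{G_{ki} G_{ik}}{G_{ii}}$ in $Y_i$, and the claimed identity follows.

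The only mild obstacle is the tracking of the $k = i$ diagonal contribution when converting from $\sum_k^{(i)} G^{(i)}_{kk}$ to $N s$; everything else is routine bookkeeping of signs.
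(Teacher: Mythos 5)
Your proof is correct and fills in exactly the computation the paper sketches: the paper's one-line proof ("compare the right-hand side of \eqref{SPSF} to its conditional expectation given $H^{(i)}$") amounts to the same decomposition you carry out, since conditioning replaces $A_{ik}G^{(i)}_{kl}A_{li}$ by $\frac{1}{N}\sum_k^{(i)}G^{(i)}_{kk}+(f/N)^2\sum_{k,l}^{(i)}G^{(i)}_{kl}$ and the remaining terms of $Y_i$ are precisely the fluctuations you isolate. Your handling of the $k=i$ diagonal term via \eqref{O3.11}, with the cancellation of the two $\frac{1}{N}G_{ii}$ contributions, is the right bookkeeping and all signs check out.
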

\begin{proof}
	The proof consists of comparing the right-hand side of \eqref{SPSF} to its conditional expectation $\bb E (\cdot\,|H^{(i)})$, and we omit the details.
\end{proof}

Next, we define the random $z$-dependent error parameter
$$\Gamma \deq \max_{i,j}|G_{ij}| \vee \max_{i,j\ne k}|G_{ij}^{(k)}|\,,$$ 
as well as the $z$-dependent indicator function
\begin{equation}
\phi \deq {\bf 1}(\Gamma\leq 2)\,.
\end{equation}

The proof consists of a stochastic continuity argument, which establishes control on the entries of $G$ under the bootstrapping assumption $\phi = 1$. The following lemma provides the key probabilistic estimates that allow us to estimate the various fluctuating error terms that appear in the proof.

\begin{lemma}[Main estimates]\la{lemma:main_estimates} 
Let $2\leq r\leq q^2\leq N$. Then
\begin{equation} \label{O4.3}
\max_i\|\phi Y_iG_{ii}\|_r \leq 48^2\bigg[\Big(\frac{r}{q^2}\Big)^{1/2}+\frac{r^2}{(N\eta)^{1/3}}+\bigg(\frac{r}{(\log N+\log \eta)q}\bigg)^2+\frac{f^2}{\sqrt{N\eta}}\bigg]\,,
\end{equation}
\begin{equation} \label{O4.4}
\max_{i\ne j}\|\phi\, G_{ij}\|_r\leq 12\bigg[\,\frac{1}{q}+\frac{r}{(N\eta)^{1/6}}+\frac{r}{(\log N+\log \eta)q}+\frac{f}{(N\eta)^{1/4}}\bigg]\,,
\end{equation}
and
\begin{equation} \label{O4.66}
\max_{i,j\ne k} \big\|\phi(G_{ij}-G_{ij}^{(k)})\big\|_r \leq 12\bigg[\,\frac{1}{q}+\frac{r}{(N\eta)^{1/6}}+\frac{r}{(\log N+\log \eta)q}+\frac{f}{(N\eta)^{1/4}}\bigg]
\end{equation}
for all $z\in \bf S$.
\end{lemma}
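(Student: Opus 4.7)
The plan is to control each of the three $L^r$ norms by expanding the Green function entries of interest via Schur's complement (Lemma \ref{Schur}) and the expression \eqref{def_Y_A_Z} for $Y_i$, and then applying the multilinear large deviation estimates of Propositions \ref{prop:lde1}, \ref{prop:critical_bound}, and \ref{prop:lde3}. The Ward identity \eqref{Ward} is used throughout to control the $\ell^2$ parameter $\gamma$ of the deterministic coefficients that appear, while the cutoff $\phi = \mathbf{1}(\Gamma \leq 2)$ is used to control their $\ell^\infty$ parameter $\psi$.

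For \eqref{O4.3}, I would substitute \eqref{def_Y_A_Z} into $\phi Y_i G_{ii}$ and treat each of the seven summands of $Y_i$ separately. The scalar $H_{ii}$ is controlled by the moment bound in Definition \ref{def:sperse}(iii); the Ward-type contribution $\frac{1}{N G_{ii}}\sum_k G_{ik}^2$ is deterministically bounded by $1/(N\eta)$ via \eqref{Ward} and the elementary inequality $|\im z/z|\leq 1$; the centred diagonal quadratic $\sum_k^{(i)}(H_{ik}^2 - 1/N)G_{kk}^{(i)}$ is estimated by Proposition \ref{prop:critical_bound} using $\max_k |G_{kk}^{(i)}|\leq 2$; and the off-diagonal quadratic $\sum_{k\neq l}^{(i)} H_{ik} G_{kl}^{(i)} H_{li}$ is bounded by Proposition \ref{prop:lde3} with $\gamma\leq\sqrt{2/(N\eta)}$ (from Ward applied to $G^{(i)}$) and $\psi\leq 2/q^2$ (from $\phi$). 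The remaining $f$-dependent summands are handled by Cauchy--Schwarz together with Ward for the deterministic pieces (yielding the $f^2/\sqrt{N\eta}$ error term), and by Proposition \ref{prop:lde1} for the linear-in-$H$ cross term $(f/N)\sum_{k,l}^{(i)} G_{kl}^{(i)}(H_{ik}+H_{li})$ after symmetrising the coefficients. Multiplying the resulting bounds by $|G_{ii}|\leq 2$ and combining them via Minkowski's inequality yields \eqref{O4.3}. For \eqref{O4.4}, I start from Schur's identity $G_{ij} = -G_{ii}\sum_k^{(i)} A_{ik} G_{kj}^{(i)}$, split $A_{ik}=H_{ik}+f/N$, and bound the linear-in-$H$ piece by Proposition \ref{prop:lde1} (with $\gamma\leq\sqrt{2/(N\eta)}$ and $\psi\leq 2/q$) and the deterministic piece by Cauchy--Schwarz and Ward. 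Finally, for \eqref{O4.66} I combine the resolvent identity \eqref{O3.11} with another application of \eqref{schurix} to obtain $G_{ij}-G_{ij}^{(k)} = -G_{kj}\sum_l^{(k)} A_{kl} G_{li}^{(k)}$, in which the prefactor $|G_{kj}|\leq 2$ is harmless and the remaining sum has exactly the form already bounded in \eqref{O4.4}.

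The main obstacle is that Propositions \ref{prop:lde1}--\ref{prop:lde3} require the coefficients to be \emph{deterministic}, whereas in our applications the coefficients $G_{kj}^{(i)}$ and $G_{kl}^{(i)}$ are random, and the indicator $\phi$ that renders them bounded depends on the full matrix $H$, including the $i$-th row or column whose Bernoulli entries we are summing over. The natural remedy is to introduce an auxiliary indicator $\phi^{(i)}$ defined as the analog of $\phi$ with $G$ replaced by $G^{(i)}$; this indicator is $H^{(i)}$-measurable, dominates $\phi$ up to a negligible error controlled by \eqref{O3.11}, and enforces the required $\ell^2$ and $\ell^\infty$ bounds on the coefficients almost surely. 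The LDEs may then be applied after conditioning on $H^{(i)}$. A secondary, purely bookkeeping difficulty is to track the numerical constants carefully enough so that, after collecting the contributions of all summands and taking Minkowski's inequality, the advertised prefactors $48^2$ and $12$ come out correctly.
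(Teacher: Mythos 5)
Your proposal follows the paper's proof essentially verbatim: the same term-by-term decomposition of $Y_i$ from \eqref{def_Y_A_Z}, the same applications of Propositions \ref{prop:lde1}, \ref{prop:critical_bound} and \ref{prop:lde3} with the same choices of $\gamma$ (via Ward) and $\psi$ (via the cutoff), and the same conditioning on $H^{(i)}$ through the auxiliary indicator $\phi^{(i)}$. One minor simplification over what you wrote: since the minors $G^{(k)}$ are already built into the definition of $\Gamma$, the domination $\phi \leq \phi^{(i)}$ is exact and no correction via \eqref{O3.11} is needed at that point.
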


\begin{proof}
We begin with \eqref{O4.3}. Pick $i \in [N]$. We shall first bound $\|\phi Y_i\|_r$ by estimating the $L^r$-norm of the terms on the right-hand side of \eqref{def_Y_A_Z}. Definition \ref{def:sperse} (iii) ensures
\begin{equation} \label{O4.10}
\|H_{ii}\|_{r} \leq 1/q\,,
\end{equation} 
and by the Ward identity \eqref{Ward} we get
\begin{equation} \label{O4.8}
\Big|\ff{N}\sum_k\frac{G_{ki}G_{ik}}{G_{ii}}\Big|\leq \frac{1}{N \eta}\,.
\end{equation}

We shall estimate the $L^r$-norms of the remaining terms on the right-hand side of \eqref{def_Y_A_Z} using the multilinear large deviation estimates from Section \ref{sec:LDE}. We begin with the term $\sum_{k\ne l}^{(i)}H_{ik}G^{(i)}_{kl}H_{l i}$. Define
\begin{equation*}
\phi^{(i)} \deq \indB{\max_{k,l \neq i} \abs{G_{kl}^{(i)}} \leq 2}\,,
\end{equation*}
and note that $\phi \leq \phi^{(i)}$. Denote by $\norm{\,\cdot\,}_{r | H^{(i)}}$ the conditional $L^r$ norm with respect to the conditional expectation $\E[ \,\cdot\, | H^{(i)}]$. Then by the nesting property of conditional $L^r$-norms, we have
\begin{equation*}
\normBB{\phi \sum_{k\ne l}^{(i)}H_{ik}G^{(i)}_{kl}H_{l i}}_r \leq \normBB{\phi^{(i)} \sum_{k\ne l}^{(i)}H_{ik}G^{(i)}_{kl}H_{l i}}_r
\\
= \normBB{\phi^{(i)} \normBB{\sum_{k\ne l}^{(i)}H_{ik}G^{(i)}_{kl}H_{l i}}_{r | H^{(i)}}}_r\,.
\end{equation*}
The conditional $L^r$ norm is amenable to the large deviation estimate from Proposition \ref{prop:lde3}. To that end, we use the Ward identity to estimate
\begin{equation}
\phi^{(i)}\,\Big(\max_k \frac{1}{N}\sum_{l}\big|G_{kl}^{(i)}\big|^2\Big)^{1/2}\vee \phi\,\Big(\max_l \frac{1}{N}\sum_{k}\big|G_{kl}^{(i)}\big|^2\Big)^{1/2} \leq  \sqrt{\frac{2}{N\eta}}\,,
\end{equation}
so that Proposition \ref{prop:lde3} with the choices $\gamma = \sqrt{\frac{2}{N \eta}}$ and $\psi = \frac{2}{q^2}$ gives
\begin{equation} \label{O4.6}
\begin{aligned}
\normBB{\phi\sum_{k\ne l}^{(i)}H_{ik}G^{(i)}_{kl}H_{l i}}_{r} &\leq \bigg[\pbb{\frac{4 r}{1 + (\log (\sqrt{N\eta} / q^2))_+}}^2+16\bigg] \bigg(\sqrt{\frac{2}{N\eta}} \vee \frac{2}{q^2}\bigg)\\
&\leq 16r^2\cdot\frac{2}{(N\eta)^{1/3}}+\Big(\frac{24r}{\log N+\log \eta}\Big)^2\cdot \frac{2}{q^2}+\frac{32}{\sqrt{N\eta}}+\frac{32}{q^2}\,,
\end{aligned}
\end{equation}
where the second step is obtained by considering the two cases $q^2 \geq (N\eta)^{1/3}$ and $q^2 \leq (N\eta)^{1/3}$ separately.

Similarly, using Proposition \ref{prop:critical_bound} on the fourth term on the right-hand side of \eqref{def_Y_A_Z} gives
\begin{equation} \label{O4.111}
\normBB{\phi \sum_k^{(i)}\pbb{H_{ik}^2-\ff{N}}G_{kk}^{(i)}}_r \leq  4\pbb{1 + \frac{2q^2}{N}}  \sqrt{\frac{r}{q^2}}\leq 12\sqrt{\frac{r}{q^2}} \,.
\end{equation} 

Next, by the Ward identity we have
\begin{equation}
\bigg|\phi\Big(\frac{f}{N}\Big)^2\sum_{k,l}^{(i)}G^{(i)}_{kl}\bigg| \leq f^2\sqrt{\frac{2}{N\eta}}\,.
\end{equation} 

Next, we estimate the last term of \eqref{def_Y_A_Z}. With the abbreviations
\[
a_k\deq \phi^{(i)} \frac{f}{N}\sum_l^{(i)} G_{kl}^{(i)} \,, \qquad X_k\deq  H_{ik}
\]
we have $\max_k|a_k|\leq \sqrt{2}f/\sqrt{N\eta}$ by the Ward identity. From Proposition \ref{prop:lde1} with $\gamma = \psi = \sqrt{2}f/\sqrt{N\eta}$ we therefore get
\begin{equation} \label{O4.13}
\normBB{\phi\frac{f}{N} \sum_{k,l}^{(i)}G_{kl}^{(i)}H_{ik}}_r = \normBB{\normBB{\sum_k^{(i)}a_kX_k}_{r | H^{(i)}}}_r \leq 2r\cdot \frac{\sqrt{2}f}{\sqrt{N\eta}}\leq (r^2+f^2)\sqrt{\frac{2}{N\eta}}\,,
\end{equation}
and the same bound can also be derived for the $L^r$-norm of $\phi\frac{f}{N} \sum_{k,l}^{(i)}G_{kl}^{(i)}H_{li}$.

Summarizing, by Minkowski's inequality and \eqref{O4.10}--\eqref{O4.13}, we get
\begin{multline*}   
\|\phi Y_iG_{ii}\|_r \leq  2 \|\phi Y_i\|_r \leq \frac{2}{q}+\frac{2}{N\eta}+\frac{64r^2}{(N\eta)^{1/3}}+\frac{(48r)^2}{(\log N+\log \eta)^2q^2}+\frac{64}{\sqrt{N\eta}}+\frac{32}{q^2}\\+24 \sqrt{\frac{r}{q^2}}
+\frac{2f}{N}+6f^2\sqrt{\frac{2}{N\eta}}+4r^2\sqrt{\frac{2}{N\eta}}\,.
\end{multline*}
and by $N\eta\geq 1$, $r \geq 2$ we have
\begin{equation} \label{O4.11}
\|\phi Y_iG_{ii}\|_r \leq  \frac{100r^2}{(N\eta)^{1/3}}+\frac{(48r)^2}{(\log N+\log \eta)^2q^2}+100 \sqrt{\frac{r}{q^2}}
+\frac{100f^2}{\sqrt{N\eta}}\,,
\end{equation}
and \eqref{O4.3} follows.

Now we turn to \eqref{O4.4}. For $i \ne j$, by \eqref{schurix} we have
\begin{equation}\la{EqAuxiliere}
G_{ij}=-G_{ii}\sum_{k}^{(i)}\Big(H_{ik}+\frac{f}{N}\Big)G^{(i)}_{kj}\,.
\end{equation}
Thus, for $i \neq j$ we have
\begin{equation*}
\|\phi\, G_{ij}\|_r\leq \normBB{\phi \,{G_{ii}\sum_{k}^{(i)}\Big(H_{ik}+\frac{f}{N}\Big)G^{(i)}_{kj}}}_r\leq 2 \normBB{\phi^{(i)} \sum_{k}^{(i)}H_{ik}G^{(i)}_{kj}}_r +\frac{4f}{\sqrt{N\eta}}\,,
\end{equation*}
where in the last step we estimated the term $\phi G_{ii}\sum_{k}^{(i)} \frac{f}{N} G^{(i)}_{kj}$ by $\frac{4f}{\sqrt{N\eta}}$, using the Ward identity. Invoking Proposition \ref{prop:lde1} with the choices $\gamma = \sqrt{\frac{2}{N \eta}}$ and $\psi = \frac{2}{q}$ yields, by the Ward identity,
\begin{multline*}
\|\phi\, G_{ij}\|_r\leq \bigg(\frac{4r}{1+2(\log(\sqrt{N\eta}/q))_+}+4\bigg)\bigg(\sqrt{\frac{2}{N\eta}}\vee\frac{2}{q}\Big) +\frac{4f}{\sqrt{N\eta}}\\
\leq 4 r\cdot\frac{2}{(N\eta)^{1/6}}+\frac{6 r}{\log N+\log \eta}\cdot \frac{2}{q}+\frac{8}{\sqrt{N\eta}}+\frac{8}{q}+\frac{4f}{\sqrt{N\eta}}\,,
\end{multline*}
where the second step is obtained by considering the two cases $q\geq (N\eta)^{1/6}$ and $q\leq (N\eta)^{1/6}$ separately. Together with $N\eta \geq 1$ we obtain \eqref{O4.4}.

Finally, to show \eqref{O4.66} we use \eqref{O3.11} and \eqref{schurix} to write
\begin{equation*}
G_{ij}^{(k)} - G_{ij} = G_{kj}\sum_l^{(k)}G_{il}^{(k)}\pbb{H_{lk} + \frac{f}{N}}\,.
\end{equation*}
The right-hand side can be estimated similarly to the right-hand side of \eqref{EqAuxiliere} to obtain \eqref{O4.66}; we omit the details.
\end{proof}

The following lemma is a standard stability estimate of the self-consistent equation associated with the semicircle law (see e.g.\ \cite[Lemma 5.5]{BK16}). Here we give a version with a sharp constant.
\begin{lemma}\label{lemma:inequality_s}
Let $z=E+\ii \eta \in \bf S$. Let $m,\tilde{m}$ be the solutions of the equation 
\[
x^2+zx+1=0\,.
\]
If $s$ satisfies $s^2+zs+1=r$ then
\begin{equation} \label{O414}
|s-m|\wedge|s-\tilde{m}| \leq  \sqrt{\abs{r}}\,.
\end{equation}
\end{lemma}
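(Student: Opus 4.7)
The plan is to reduce the inequality to a direct algebraic identity using Vieta's formulas. Since $m, \tilde m$ are the two roots of $x^2 + zx + 1 = 0$, Vieta gives $m + \tilde m = -z$ and $m \tilde m = 1$. Therefore, as a polynomial in $x$, we have the factorization
\begin{equation*}
x^2 + zx + 1 = (x-m)(x-\tilde m).
\end{equation*}

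Evaluating this identity at $x = s$ and using the hypothesis $s^2 + zs + 1 = r$, I obtain
\begin{equation*}
(s - m)(s - \tilde m) = s^2 - (m + \tilde m) s + m \tilde m = s^2 + zs + 1 = r.
\end{equation*}
Taking absolute values yields $|s-m|\cdot|s-\tilde m| = |r|$, and since the minimum of two nonnegative real numbers is bounded by the geometric mean of the two,
\begin{equation*}
\pb{|s-m| \wedge |s-\tilde m|}^2 \leq |s-m|\cdot|s-\tilde m| = |r|,
\end{equation*}
which gives \eqref{O414} upon taking square roots.

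There is no real obstacle: the only thing to notice is that the quantity $s^2 + zs + 1$ factors as $(s-m)(s-\tilde m)$ via Vieta, which immediately converts the defining equation for $s$ into a product formula for the two distances $|s-m|$ and $|s-\tilde m|$. The constant in the estimate is sharp because equality holds when $|s-m| = |s-\tilde m|$.
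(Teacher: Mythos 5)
Your proof is correct. It rests on the same underlying identity as the paper's, namely that the product of the two distances equals $\abs{r}$, but you reach it by a cleaner route: the Vieta factorization $(s-m)(s-\tilde m)=s^2+zs+1=r$ combined with the fact that the minimum of two nonnegative numbers is at most their geometric mean. The paper instead writes out the quadratic-formula expressions $m=\frac{-z+\sqrt{z^2-4}}{2}$, $\tilde m=\frac{-z-\sqrt{z^2-4}}{2}$, $s=\frac{-z+\sqrt{z^2-4+4r}}{2}$ and invokes the elementary bound $\absb{\sqrt{a+b}-\sqrt{a}}\wedge\absb{\sqrt{a+b}+\sqrt{a}}\leq \frac{\abs{b}}{\sqrt{\abs{a}+\abs{b}}}\leq\sqrt{\abs{b}}$, which is the same product identity in disguise (the product of those two factors is exactly $\abs{b}$). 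Your version avoids the explicit square roots and the attendant branch-cut discussion entirely, and requires no assumption about which branch $s$ corresponds to. The only thing the paper's formulation buys is the sharper intermediate bound $\abs{b}/\sqrt{\abs{a}+\abs{b}}$, which is never used in the sequel; so nothing is lost by your argument.
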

\begin{proof}
We choose the branch cut on the positive real axis, which gives $\im \sqrt{z} \geq 0$ for all $z \in \bb C \setminus \R_+$.
Thus 
\begin{equation*}
m=\frac{-z+\sqrt{z^2-4}}{2} \quad \mbox{and} \quad \tilde{m}=\frac{-z-\sqrt{z^2-4}}{2}\,.
\end{equation*}
Also, we have
\[
s=\frac{-z+\sqrt{z^2-4+4r}}{2}\,.
\]
Then \eqref{O414} follows from the fact that
\[
|\sqrt{a+b}-\sqrt{a}| \wedge |\sqrt{a+b}+\sqrt{a}| \leq \frac{|b|}{\sqrt{|a|+|b|}} \leq \sqrt{\abs{b}}
\]
for any $a,b \in \bb C$ and any complex square root $\sqrt{\cdot}$.
\end{proof}

Next, we set up the bootstrapping argument. Fix $E \in \bb R$. Let $K \deq \max \{k \in \bb N\col 1-kN^{-2}\geq N^{-1}\}$, and for $k =0,1,\dots,K$, we define the spectral parameter $z_k\deq E+\ii \eta_k$. We abbreviate
\begin{equation}
\zeta_k\deq \Big(\frac{r}{q^2}\Big)^{1/4}+\frac{r}{(N\eta_k)^{1/6}}+\frac{r}{(\log N+\log \eta_k) q }+\frac{f}{(N\eta_k)^{1/4}}
\end{equation}
where $\eta_k \deq 1-kN^{-2}$. 

The main work is to prove the following result, which states that Theorem \ref{maintheorem} holds for the spectral parameter $z$ being in the lattice $\{z_0,z_1,\dots, z_K\}$. To simplify notation, we use the variable $\xi\deq t/750$ instead of the $t$ from Theorem \ref{maintheorem}.

\begin{proposition} \label{prop3.5}
Let $\xi > 0$. Suppose that $2\leq r\leq q^2\leq N$, and $750 \, \xi \zeta_k \leq 1$ for all $k=0,1,\dots,K$. We have
\begin{equation}  \label{Oresult}
\bb P\Big(\max_{i,j}|G_{ij}(z_k)-m(z_k)\delta_{ij}|>360\xi\zeta_k \Big)\leq (3k+1)N^3\xi^{-r}\,.
\end{equation}
\end{proposition}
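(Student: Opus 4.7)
The proof is by induction on $k = 0, 1, \ldots, K$, propagating the local law from the easy scale $\eta_0 = 1$ down to the critical scale $\eta_K \sim N^{-1}$.

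\emph{Base case} ($k=0$). The deterministic bound $|G_{ij}(z_0)| \leq 1/\eta_0 = 1$ yields $\Gamma(z_0) \leq 1$, so $\phi(z_0) = 1$ surely and Lemma \ref{lemma:main_estimates} applies unconditionally. Chebyshev's inequality converts the $L^r$ estimates on $Y_iG_{ii}$, on $G_{ij}$ ($i\ne j$), and on $G_{ij} - G^{(l)}_{ij}$ into probability bounds which, after a union bound over the at most $N^3$ index combinations, fail only on an exceptional event of probability $\leq N^3\xi^{-r}$. Off this event, averaging \eqref{O4.1} over $i$ yields the approximate self-consistent equation
\begin{equation*}
s^2 + zs + 1 = \frac{1}{N}\sum_i G_{ii} Y_i\,,
\end{equation*}
whose right-hand side is controlled in terms of $\xi\zeta_0^2$. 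Lemma \ref{lemma:inequality_s} then gives $|s - m|\wedge|s - \tilde m| = O(\xi\zeta_0)$, the $m$-branch being selected from $\mathrm{Im}\, s, \mathrm{Im}\, m > 0$. Substituting $s$ back into \eqref{O4.1} controls $G_{ii} - m$, and together with the off-diagonal bound \eqref{O4.4} this yields \eqref{Oresult} at $k=0$.

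\emph{Inductive step.} Assume \eqref{Oresult} at $z_k$ and let $\Omega_k$ be the corresponding event. On $\Omega_k$, the hypothesis $750\xi\zeta_k \leq 1$ together with $|m|\leq 1$ gives $\Gamma(z_k) \leq 3/2$, so $\phi(z_k) = 1$. The key continuity step is to inherit this at $z_{k+1}$: by the resolvent identity $G(z_{k+1})-G(z_k) = (z_{k+1}-z_k)G(z_k)G(z_{k+1})$, Cauchy--Schwarz, and the Ward identity, for any $i,j$,
\begin{equation*}
|G_{ij}(z_{k+1}) - G_{ij}(z_k)| \leq |z_{k+1}-z_k|\sqrt{\tfrac{\mathrm{Im}\,G_{ii}(z_k)}{\eta_k}}\sqrt{\tfrac{\mathrm{Im}\,G_{jj}(z_{k+1})}{\eta_{k+1}}} \leq N^{-2}\cdot \sqrt{\tfrac{3N}{2}}\cdot N = O(N^{-1/2})\,,
\end{equation*}
using the bound $\tfrac{3}{2}$ on the first imaginary part coming from $\Omega_k$ and the deterministic bound $1/\eta_{k+1} \leq N$ on the second. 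The same estimate applies to the minors $G^{(l)}_{ij}$. Hence, for $N$ large, $\Omega_k \subseteq \{\phi(z_{k+1}) = 1\}$, and the base-case analysis may be repeated verbatim at $z_{k+1}$: Lemma \ref{lemma:main_estimates} supplies the $L^r$ bounds on $\phi Y_iG_{ii}$, $\phi G_{ij}$, and $\phi(G_{ij}-G_{ij}^{(l)})$, Chebyshev plus a union bound introduces an additional exceptional event of probability $\leq 3N^3\xi^{-r}$, and the averaged self-consistent equation together with Lemma \ref{lemma:inequality_s} yield $|G_{ij}(z_{k+1}) - m\delta_{ij}| \leq 360\xi\zeta_{k+1}$ on its complement (the correct $m$-branch again being selected by continuity from $\Omega_k$). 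Summing the exceptional probabilities, $(3k+1)N^3\xi^{-r} + 3N^3\xi^{-r} = (3(k+1)+1)N^3\xi^{-r}$, closes the induction.

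\emph{Main obstacle.} The delicate point is tracking numerical constants in the continuity transfer so that $\phi(z_k) = 1$ implies $\phi(z_{k+1}) = 1$ — this is what dictates the prefactor $360$ against the threshold $2$ in the definition of $\phi$, and uses crucially the lattice spacing $N^{-2}$. A secondary subtlety is branch selection in Lemma \ref{lemma:inequality_s}: at $\eta_0 = 1$ we start on the $m$-branch, and the closeness of $s$ across adjacent lattice points (together with the separation $|m - \tilde m|$ bounded below whenever $\mathrm{Im}\, m$ is bounded below) guarantees we remain on this branch throughout.
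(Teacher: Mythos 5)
Your overall architecture — induction down the lattice $\eta_k = 1 - kN^{-2}$, a continuity transfer of the a priori bound $\Gamma \leq 2$ from one lattice point to the next, then the self-consistent equation plus Lemma \ref{lemma:inequality_s} and Chebyshev at the new point — is the same as the paper's, and your resolvent-identity/Ward-identity version of the continuity step is a valid alternative to the paper's $|\dd \Gamma/\dd\eta| \leq 1/\eta^2$. However, there is a genuine gap in the branch-selection step. You select the root $m$ over $\tilde m$ ``by continuity from $\Omega_k$,'' justified by the claim that $|m - \tilde m|$ is bounded below whenever $\im m$ is. That justification is vacuous precisely where it is needed: near the spectral edges one has $|m(z)-\tilde m(z)| = |\sqrt{z^2-4}|$, which for $E$ close to $\pm 2$ and small $\eta$ can be far smaller than the fluctuation scale $\xi\zeta_k$ (e.g.\ of order $N^{-1/2}$ at $E=2$, $\eta = N^{-1}$, versus $\zeta_k \geq (r/q^2)^{1/4} \gtrsim N^{-1/4}$). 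In that regime Lemma \ref{lemma:inequality_s} only tells you that $s$ is within $50\xi\zeta_k$ of \emph{one} of two roots that are themselves closer than $50\xi\zeta_k$ to each other, and no continuity argument can identify which; your induction as written does not close there. The paper handles this by introducing the threshold index $\tilde K$ and splitting the induction into two cases: for $k < \tilde K$ (where $105\xi\zeta_k < |m-\tilde m|$) the continuity argument you describe works, and for $k \geq \tilde K$ one simply uses the triangle inequality $|s-m| \leq |s-\tilde m| + |m - \tilde m| \leq 155\xi\zeta_k$, abandoning branch selection altogether. This is also where the constant $360$ actually comes from (via $|G_{ii}-m| \leq |(s-m)G_{ii}| + |Y_iG_{ii}| \leq 2\cdot 155\xi\zeta_k + 50\xi\zeta_k$), not from the continuity transfer as you suggest; the monotonicity of $|m-\tilde m|$ in $\eta$ and of $\zeta_k$ in $k$ guarantees the two regimes are traversed in order.

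A secondary, fixable imprecision: your induction hypothesis is literally the event in \eqref{Oresult}, which controls only the entries $G_{ij}(z_k)$, whereas $\Gamma$ (and hence $\phi$) also involves the minors $G_{ij}^{(l)}(z_k)$. To conclude $\phi(z_{k+1})=1$ you must carry control of the minors through the induction as well, which is why the paper propagates the separate event $\Xi_k = \{\Gamma(z_k) \leq 3/2\}$ using the estimate \eqref{O4.66} on $G_{ij}-G_{ij}^{(l)}$; you invoke \eqref{O4.66} but should make the strengthened induction hypothesis explicit.
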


\begin{proof}
Throughout the proof we assume $\xi \geq 1$, for otherwise the proof is trivial. We proceed by induction on $k$. Since the sequence $\zeta_k$ is increasing, we may without loss of generality assume that the condition $1200 \, \xi \zeta_k \leq 1$ holds for all $k = 0, \dots, K$. Since the spectral parameter $z$ varies in the proof, in this proof we always indicate it explicitly.

We define the events
\[
\Omega_k\deq \big\{|s(z_k)-m(z_k)|\leq 50\xi\zeta_k \big\}\,, \quad \Xi_k \deq \{\Gamma(z_k)\leq 3/2\}
\]
for $k=0,1,\dots,K$.

For $k=0$, we see that $\Gamma(z_0) \leq 1$ and hence $\phi(z_0)=1$. Thus $\bb P(\Xi_0)=1$. From \eqref{O4.1} we find
\begin{equation*}
1 + z s + s^2 = \frac{1}{N} \sum_i G_{ii} Y_i\,,
\end{equation*}
so that, by Minkowski's inequality and \eqref{O4.3},
	\begin{multline*}
		\|1+z_0 s(z_0)+s(z_0)^2\|_r=\|\phi(1+z_0 s(z_0)+s(z_0)^2)\|_r \leq \max_i\| \phi(z_0) Y_i(z_0) G_{ii}(z_0)\|_r\\ \leq  48^2\Big[\Big(\frac{r}{q^2}\Big)^{1/2}+\frac{r^2}{(N\eta_0)^{1/3}}+\Big(\frac{r}{(\log N+\log \eta_0)q}\Big)^2+\frac{f^2}{\sqrt{N\eta_0}}\Big] \leq  (50\zeta_0)^2\,.
	\end{multline*}
    Thus Chebyshev's inequality implies
	\begin{equation} \label{Otemp1}
	\bb P\Big(|1+z_0s(z_0)+s(z_0)^2|>(50\xi\zeta_0)^2\Big) \leq \xi^{-2r} 
	\end{equation}
Hence Lemma \ref{lemma:inequality_s} yields
	\begin{equation} \label{O4199}
	\bb P \Big(|s(z_0)-m(z_0)| \wedge |s(z_0)-\tilde{m}(z_0)| >  50\xi\zeta_0\Big)  \leq  \xi^{-2r}\,.
	\end{equation}
	Note that $\im s>0$, and for $\eta_0=1$, we easily see that $\im \tilde m(z_0) < -1$. Thus $|s(z_0)-\tilde{m}(z_0)|>1$, and together with $50\xi \zeta_0\leq1$ we get $\bb P(\Omega_0^c)\leq \xi^{-2r}$.
Similarly, by \eqref{O4.3} and $50\xi\zeta_0\leq 1$ we have
	\begin{equation} \label{O419}
	\max_i\bb P \Big(|Y_i(z_0)G_{ii}(z_0)| > 50\xi\zeta_0\Big) \leq \max_i\bb P \Big(|Y_i (z_0) G_{ii}(z_0)| > (50\xi\zeta_0)^2\Big) \leq \xi^{-2r}\,.
	\end{equation}
From \eqref{O4.1} we get
	\begin{equation*}
	1+(z+m)G_{ii}=(m-s)G_{ii}+Y_iG_{ii}\,,
	\end{equation*}
so that by $m^2+zm+1=0$ and the elementary estimate $|m|\leq1$ we have
	\begin{equation} \label{O421}
	|G_{ii}-m|\leq |(s-m)G_{ii}|+ \abs{Y_iG_{ii}}\,.
	\end{equation}
	By $\bb P(\Omega_0^c)\leq \xi^{-2r}$ and \eqref{O419}--\eqref{O421} we have
	\begin{equation} \label{O423}
	\max_i\bb P(|G_{ii}(z_0)-m(z_0)|>150\xi\zeta_0) \leq 2\xi^{-2r}\,.
	\end{equation} 
	Similarly, by \eqref{O4.4} we have
   \begin{equation}\label{O4.23}
   \max_{i\ne j}\bb P (|G_{ij}(z_0)|>12\xi\zeta_0) \leq \xi^{-r}\,.
   \end{equation}
   Thus a union bound shows
   \begin{equation}
   \bb P\Big(\max_{i,j}|G_{ij}(z_0)-m(z_0)\delta_{ij}|>150\xi\zeta_0 \Big)\leq 2N\xi^{-2r}+N^2\xi^{-r}\leq 2N^2\xi^{-r}\,,
   \end{equation}
   where in the last step we used without loss of generality $\xi\geq1$ (for otherwise the probability bound is trivial). This proves the case $k=0$.
   
  For $k \geq 1$, we introduce the threshold index $\tilde{K}\deq\min \{k\col k\leq K, 105\xi\zeta_k\geq |m(z_k)-\tilde{m}(z_k)|\}$. Note that 
   \begin{equation}
   |m-\tilde{m}|^4=\eta^4+2E^2\eta^2+8\eta^2+E^4+16-8E^2
   \end{equation}
   is an increasing function of $\eta$. Thus, the sequence $|m(z_k)-\tilde{m}(z_k)|$ is decreasing and the sequence $\zeta_k$ is increasing. Hence
   $105\xi\zeta_k< |m(z_k)-\tilde{m}(z_k)|$ for all $k<\tilde{K}$.
   
   \textit{Case 1:} $1\leq k<\tilde{K}$. Since $ |\dd\Gamma/ \dd \eta|\leq 1/\eta^2$, we have $\phi(z_k)=1$ on $\Xi_{k-1}$. As in \eqref{O4199}, we have
   \begin{equation} 
   	\bb P \bigg(\phi|s(z_k)-m(z_k)| \wedge \phi|s(z_k)-\tilde{m}(z_k)| >50 \, \xi\zeta_k\bigg)  \leq  \xi^{-2r}\,.
   	\end{equation}
   	By Lipschitz continuity, $N\eta\geq 1$, and $\xi \geq 1$, we have
    \[
    |s(z_k)-m(z_k)|\leq 50\xi\zeta_{k-1}+\frac{2}{N^2\eta^2}\leq  52 \xi \zeta_k
    \]
    on $\Omega_{k-1}$. Note that for $k<K$ we have
    \[
    |m(z_k)-\tilde{m}(z_k)|\geq 105 \, \xi\zeta_k\,,
    \]
   thus on $\Omega_{k-1}$ we have $|s(z_k)-m(z_k)| \wedge |s(z_k)-\tilde{m}(z_k)|=|s(z_k)-m(z_k)|$. We have therefore proved that
   \begin{equation} \label{O1}
   \bb P (\Omega_{k-1}\cap\Xi_{k-1}\cap \Omega_k^c) \leq \xi^{-2r}\,.
   \end{equation}
   As in \eqref{O423}, we can use \eqref{O4.3} and \eqref{O421} to show that
   \begin{equation} \label{O428}
   \max_i\bb P \pB{\Omega_{k-1}\cap\Xi_{k-1}\cap \{|G_{ii}(z_k)-m(z_k)|>150 \, \xi\zeta_k\}} \leq 2\xi^{-2r}\,.
   \end{equation}
   Also, by \eqref{O4.4} and \eqref{O4.66} we see that
   \begin{equation} \label{O429}
   \max_{i\ne j}\bb P (\Xi_{k-1}\cap \{|G_{ij}(z_k)|>12\xi\zeta_k\}) \leq \xi^{-r}
   \end{equation}
   and
   \begin{equation}
   \max_{i,j\ne l}\bb P (\Xi_{k-1}\cap \{|G_{ij}(z_k)-G_{ij}^{(l)}|>12\xi\zeta_k\}) \leq \xi^{-r}\,.
   \end{equation}
   Thus by $|m|\leq1 $ and a union bound we have
   \begin{equation}
   \bb P (\Omega_{k-1}\cap\Xi_{k-1}\cap \{\Gamma(z_k)>1+162 \, \xi\zeta_k\}) \leq 2N\xi^{-2r}+(N^2+N^3)\xi^{-r}\leq 2N^3\xi^{-r}.
   \end{equation}
Note that the assumption $750\xi\zeta_k\leq1 $ ensures $162 \, \xi \zeta_k<1/2$, we find
   \begin{equation} \label{O2}
   \bb P (\Omega_{k-1}\cap\Xi_{k-1}\cap \Xi_k^c) \leq 2N^3\xi^{-r}.
   \end{equation}
   Note that $\bb P(\Omega_0\cap \Xi_0)\geq 1-\xi^{-2r}
   \geq 1-\xi^{-r}$, thus \eqref{O1}, \eqref{O2}, and an induction argument shows
   \begin{equation} \label{O433}
   \bb P (\Omega_{k}\cap\Xi_{k}) \geq 1-(2kN^3+1)\xi^{-r}
   \end{equation}
   for all $k<\tilde{K}$. Thus \eqref{Oresult} for $1 \leq k < \tilde K$ follows from \eqref{O428}, \eqref{O429}, and a union bound.
   
   \textit{Case 2:} $\tilde{K}\leq k\leq K$. As in Case 1, we have  $\phi(z_k)=1$ on $\Xi_{k-1}$, and
   \begin{equation} 
   \bb P \bigg(\phi|s(z_k)-m(z_k)| \wedge \phi|s(z_k)-\tilde{m}(z_k)| > 50 \, \xi\zeta_k\bigg)  \leq  \xi^{-2r}\,.
   \end{equation}
   Note that $ |m(z_k)-\tilde{m}(z_k)|\leq 105\xi\zeta_k$, so that the triangle inequality yields
   \begin{equation}
   \P (\Xi_{k-1}\cap \{|s(z_k)-m(z_k)|>155\xi\zeta_k\}) \leq \xi^{-2r}\,.
   \end{equation}
   As in \eqref{O428}, \eqref{O429}, and \eqref{O2}, we can show that
   \begin{equation}  \label{O436}
   \max_i\bb P (\Xi_{k-1}\cap \{|G_{ii}(z_k)-m(z_k)|>360\xi\zeta_k\}) \leq 2\xi^{-2r}\,,
   \end{equation}
   \begin{equation}  \label{O437}
   \max_{i\ne j}\bb P (\Xi_{k-1}\cap \{|G_{ij}(z_k)|>12\xi\zeta_k\}) \leq \xi^{-r}\,,
   \end{equation}
   and
   \begin{equation}  \label{O438}
   \bb P (\Xi_{k-1}\cap \Xi_k^c) \leq 2N^3\xi^{-r},
   \end{equation}
   where in showing \eqref{O438} we used the fact that $750\xi\zeta_k \leq 1$. Note that \eqref{O433} shows $\bb P(\Xi_{\tilde{K}-1})\geq 1-(2(\tilde {K}-1)N^3+1)\xi^{-r}$, so that together with \eqref{O438} we have
   \begin{equation} 
   \bb P (\Xi_{k}) \geq 1-(2kN^3+1)\xi^{-r}
   \end{equation}
   for all $\tilde{K}\leq k \leq K$. Thus \eqref{Oresult} for $\tilde{K} \leq k \leq K$ follows from \eqref{O436}, \eqref{O437}, and a union bound.
\end{proof}

We may now easily conclude the proof of Theorem \ref{maintheorem}. Without loss of generality, we can assume that $r \leq q^2$, for otherwise the condition \eqref{t_zeta} implies that $t \leq 1$, in which case Theorem \ref{maintheorem} is trivially true. By using $K\leq N^2-N$, $\xi=t/750$, and $r\geq 10$, we deduce from \eqref{Oresult} that for $k=0,1,\dots,K$, we have
\begin{equation}
\bb P\Big(\max_{i,j}|G_{ij}(z_k)-m(z_k)\delta_{ij}|>t\zeta_k/2 \Big)\leq N^5\Big(\frac{1000}{t}\Big)^r\,.
\end{equation}
Note that $|\dd G/\dd \eta|,|\dd m/\dd \eta|\leq 1/\eta^2$, and $\zeta $ is a decreasing function of $\eta$. Thus for any $z \in \f S$, we have
\begin{equation*}
\bb P\Big(\max_{i,j}|G_{ij}-m\delta_{ij}|>\frac{t\zeta}{2}+\frac{2}{N^2\eta^2} \Big)\leq N^5\Big(\frac{1000}{t}\Big)^r\,.
\end{equation*}
Note that the above is trivial for $t\leq 1000$, thus by $1/(N\eta)^2\leq \zeta$ we get
\begin{equation} \label{O4.44}
\bb P\Big(\max_{i,j}|G_{ij}-m\delta_{ij}|>t\zeta \Big)\leq N^5\Big(\frac{1000}{t}\Big)^r\,.
\end{equation}
This proves Theorem \ref{maintheorem} for $C_*=1000$.

\appendix
\section{Proof of \eqref{semi}} \label{sec:appendix}

In \eqref{semiH}, let $r=\log N$ and $t=C_*\ee^{5+D}$ for some $D>0$, which yields
\[
\bb P \qB{\max_{i,j}|G_{ij}-m\delta_{ij}|\leq C_*\ee^{5+D}\zeta} \geq  1-N^{-D}\,.
\] 
Let us work under the assumption $\eta\geq N^{-1+\tau}$ for some $\tau>0$. By the definition of $\zeta$ in \eqref{zeta}, in order to show $C_*\ee^{5+D}\zeta<\delta$ for some given $\delta>0$, it suffices to show
\begin{equation} \label{A}
C_*\ee^{5+D}\max\bigg\{\pbb{\frac{\log N}{q^2}}^{1/4},\ \frac{\log N}{(N\eta)^{1/6}}\,,\ \frac{\log N}{(\log \eta+\log N)q}\,,\ \frac{f}{(N\eta)^{1/4}}\bigg\} <\delta/4\,.
\end{equation}
This leads to the three conditions
\begin{equation} \label{condition}
q \geq \max\Big\{\Big(\frac{4C_*\ee ^{5+D}}{\delta}\Big)^2\sqrt{\log N},\,\frac{4C_*\ee^{5+D}}{\tau\delta} \Big\}\,, \quad f\leq \frac{\delta}{4C_*\ee^{5+D}} \cdot N^{\tau/4}\,, \quad  N^{\tau}\geq \Big(\frac{4C_*\ee^{5+D}\log N}{\delta}\Big)^6\,.
\end{equation}
where the first condition comes from the first and third terms in \eqref{A}, the second condition comes from the last term in \eqref{A}, and the last condition comes from the second term in \eqref{A}.
 
 \begin{itemize}
 \item
 In order to prove \eqref{semi} for $q \geq C \sqrt{\log N}$, we shall assume
\begin{equation} \label{tau}
\tau \geq (\log N)^{-1/2}
\end{equation}
for the rest of this appendix. (One easily checks that for a much smaller choice of $\tau \ll (\log N)^{-1/2}$, the first condition in \eqref{condition} implies $q \gg \sqrt{\log N}$, i.e.\ $q$ has to be much larger than the critical scale.)

\item
By the first condition in \eqref{condition}, we need
\begin{equation} \label{C}
q \geq C \sqrt{\log N}\,, \quad \mbox{where}\quad C\equiv C(\delta,D)\deq\Big(\frac{4C_*\ee ^{5+D}}{\delta}\Big)^2\,.
\end{equation}

\item
By the second condition in \eqref{condition} and $f \leq q$, it suffices to have
\begin{equation} \label{qqq}
q \leq (\log N)^{10} 
\end{equation}
and
\[
 (\log N)^{10} \leq \frac{\delta}{4C_*\ee^{5+D}} \cdot N^{(\log N)^{-1/2}/4}\,,
\]
where the latter can be simplified to
\begin{equation} \label{N_0}
  N\geq \exp \qbb{\pbb{\ee^{10}\log \pbb{\frac{4 C_* \ee^{5 + D}}{\delta}}}^2} \eqd N_0(\delta, D)\,.
\end{equation}

\item
One easily checks that $N \geq N_0$ satisfies the last condition in \eqref{condition}.

Thus we see that \eqref{semi} is true provided \eqref{tau}--\eqref{N_0} holds. 

\end{itemize}

\subsection*{Acknowledgements}
The authors are supported by the Swiss National Science Foundation and the European Research Council.

{\small
	
	\bibliography{bibliography} 

\providecommand{\bysame}{\leavevmode\hbox to3em{\hrulefill}\thinspace}
\providecommand{\MR}{\relax\ifhmode\unskip\space\fi MR }
\providecommand{\MRhref}[2]{%
  \href{http://www.ams.org/mathscinet-getitem?mr=#1}{#2}
}
\providecommand{\href}[2]{#2}
\begin{thebibliography}{10}

\bibitem{AEK1}
O.~Ajanki, L.~Erd{\H o}s, and T.~Kr{\"u}ger, \emph{Quadratic vector equations
  on complex upper half-plane}, arXiv preprint arXiv:1506.05095.

\bibitem{AEK2}
\bysame, \emph{Universality for general {W}igner-type matrices}, Prob.\ Theor.\
  Rel.\ Fields \textbf{169} (2017), 667--727.

\bibitem{BHY1}
R.~Bauerschmidt, J.~Huang, and H.-T. Yau, \emph{Local {K}esten-{M}c{K}ay law
  for random regular graphs}, Preprint arXiv:1609.09052.

\bibitem{BKY15}
R.~Bauerschmidt, A.~Knowles, and H.-T. Yau, \emph{Local semicircle law for
  random regular graphs}, Comm.\ Pure Appl.\ Math.\ \textbf{70} (2017),
  1898--1960.

\bibitem{BK16}
F.~Benaych-Georges and A.~Knowles, \emph{Lectures on the local semicircle law
  for {W}igner matrices}, Preprint arXiv:1601.04055 (2016).

\bibitem{BBK2}
Florent Benaych-Georges, Charles Bordenave, and Antti Knowles, \emph{Largest
  eigenvalues of sparse inhomogeneous {E}rd{\H o}s-{R}\'enyi graphs}, Preprint
  arXiv:1704.02953 (2017).

\bibitem{BBK1}
\bysame, \emph{Spectral radii of sparse random matrices}, Preprint
  arXiv:1704.02945 (2017).

\bibitem{DZ18}
I.~Dumitriu and Y.~Zhu, \emph{{S}parse general {W}igner-type matrices: Local
  law and eigenvector delocalization}, Preprint arXiv:1808.07611 (2018).

\bibitem{EKYY2}
L.~Erd{\H{o}}s, A.~Knowles, H.-T. Yau, and J.~Yin, \emph{Spectral statistics of
  {E}rd{\H{o}}s-{R}\'enyi graphs {II}: Eigenvalue spacing and the extreme
  eigenvalues}, Comm.\ Math.\ Phys.\ \textbf{314} (2012), 587--640.

\bibitem{EKYY3}
\bysame, \emph{Delocalization and diffusion profile for random band matrices},
  Comm. Math. Phys. \textbf{323} (2013), 367--416.

\bibitem{EKYY1}
\bysame, \emph{Spectral statistics of {E}rd{\H{o}}s-{R}\'enyi graphs {I}: Local
  semicircle law}, Ann. Prob. \textbf{41} (2013), 2279--2375.

\bibitem{ESY2}
L.~Erd{\H{o}}s, B.~Schlein, and H.-T. Yau, \emph{Local semicircle law and
  complete delocalization for {W}igner random matrices}, Comm.\ Math.\ Phys.\
  \textbf{287} (2009), 641--655.

\bibitem{ESY1}
\bysame, \emph{Semicircle law on short scales and delocalization of
  eigenvectors for {W}igner random matrices}, Ann.\ Prob. \textbf{37} (2009),
  815--852.

\bibitem{EYY1}
L.~Erd{\H{o}}s, H.-T. Yau, and J.~Yin, \emph{Bulk universality for generalized
  {W}igner matrices}, Prob.\ Theor.\ Rel.\ Fields \textbf{154} (2012),
  341--407.

\bibitem{HK}
Y.~He and A.~Knowles, \emph{Mesoscopic eigenvalue statistics of {W}igner
  matrices}, Ann. Appl. Prob. \textbf{27} (2017), 1510--1550.

\bibitem{HKR}
Y.~He, A.~Knowles, and R.~Rosenthal, \emph{Isotropic self-consistent equations
  for mean-field random matrices}, Preprint arXiv:1611.05364.

\bibitem{HL}
J.~Huang and B.~Landon, \emph{Bulk universality of sparse random matrices}, J.\
  Math.\ Phys.\ \textbf{56} (2015).

\bibitem{HLY}
J.~Huang, B.~Landon, and H.-T. Yau, \emph{Transition from {T}racy-{W}idom to
  {G}aussian fluctuations of extremal eigenvalues of sparse {E}rd{\H
  o}s--{R}\'enyi graphs}, Preprint arXiv: 1712.03936 (2017).

\bibitem{LS1}
J.O. Lee and K.~Schnelli, \emph{Local law and {T}racy-{W}idom limit for sparse
  random matrices}, Preprint arXiv:1605.08767.

\bibitem{TVW}
L.V. Tran, V.H. Vu, and K.~Wang, \emph{Sparse random graphs: Eigenvalues and
  eigenvectors}, Random Structures \& Algorithms (2010).

\bibitem{vanH}
R.~van~der Hofstad, \emph{Random graphs and complex networks. {V}olume 1},
  Cambridge Series in Statistical and Probabilistic Mathematics, 2017.

\end{thebibliography}
	
	\bibliographystyle{amsplain}
}
\end{document}